\renewcommand{\cite}{\citep}
\newcommand{\es}{\varnothing}
\newtheorem{theorem}{Theorem}
\newtheorem{lemma}{Lemma}
\newtheorem{definition}{Definition}
\newtheorem{remark}{Remark}
\title{{\sc Black-and-White Threshold Graphs}}
\author{
 Ling-Ju~Hung$^1$
\and
 Ton~Kloks%
\and
 Fernando~S.~Villaamil$^2$
}
\affiliation{$^1$
 Department of Computer Science and Information Engineering\\
 National Chung Cheng University,
 Chia-Yi 621, Taiwan\\
Email:~{\tt hunglc@cs.ccu.edu.tw}\\
 $^2$Department of Computer Science\\
 RWTH Aachen University, Aachen 52056, Germany\\
 Email:~{\tt fernando.sanchez@rwth-aachen.de}
}
\begin{document}

\maketitle

\toappear{This research was supported by the National Science
Council of Taiwan, under grants NSC~98--2218--E--194--004 and
NSC~98--2811--E--194--006. Ton Kloks is supported by the National
Science Council of Taiwan, under grants NSC~99--2218--E--007--016
and NSC~99--2811--E--007--044.}
 \toappearstandard
\begin{abstract}
Let $k$ be a natural number. We introduce {\em $k$-threshold
graphs}. We show that there exists an $O(n^3)$ algorithm for the
recognition of $k$-threshold graphs for each natural number $k$.
$k$-Threshold graphs are characterized by a finite collection of
forbidden induced subgraphs. For the case $k=2$ we characterize the
{\em partitioned 2-threshold graphs} by forbidden induced subgraphs.
We introduce {\em restricted}, and {\em special 2-threshold graphs}.
We characterize both classes by forbidden induced subgraphs. The
restricted 2-threshold graphs coincide with the {\em switching class
of threshold graphs}. This provides a decomposition theorem for the
switching class of threshold graphs.
\end{abstract}

\section{Introduction}
%%%%%%%%%%%%%%%%%%%%%%

A graph is a pair $G=(V,E)$ where $V$ is a finite, nonempty set and
where $E$ is a set of two-element subsets of $V$. We call the
elements of $V$ the vertices or points of the graph. We denote the
elements of $E$ as $(x,y)$ where $x$ and $y$ are vertices. We call
the elements of $E$ the edges of the graph. For two sets $A$ and
$B$, we use $A+B$ and $A-B$ to denote $A\cup B$ and $A\setminus B$
respectively. For a set $A$ and an element $x$, denote $A\cup \{x\}$
and $A\setminus \{x\}$ as $A+x$ and $A-x$ respectively. If $e=(x,y)$
is an edge of a graph then we call $x$ and $y$ the endpoints of $e$
and we say that $x$ and $y$ are adjacent. The {\em open
neighborhood} of a vertex $x$ is the set of vertices $y$ such that
$(x,y) \in E$. We denote open neighborhood by $N(x)$. Define the
{\em closed neighborhood} of $x$ by $N(x)+x$. We use $N[x]$ to
denote the closed neighborhood of $x$. Define $\Delta(x)=V-N[x]$. We
call $\Delta(x)$ the {\em anti-neighborhood} of $x$. The degree of a
vertex $x$ is the cardinality of $N(x)$. Let $W \subseteq V$ and let
$W \neq \es$. The graph $G[W]$ induced by $W$ has $W$ as its set of
vertices and it has those edges of $E$ that have both endpoints in
$W$. If $W \subset V$, $W \neq V$, then we write $G-W$ for the graph
induced by $V \setminus W$. In case $W$ consists of a single vertex
$x$ we write $G-x$ instead of $G-\{x\}$. We usually denote the
number of vertices of a graph by $n$.

A path is a graph of which the vertices can be linearly
ordered such that the pairs of consecutive vertices form the
set of edges of the graph. We call the first and last vertex in this
ordering the terminal vertices of the path.
We denote a path with $n$ vertices by $P_n$. To denote a specific
ordering of the vertices we use the notation $[x_1,\ldots,x_n]$.
Let $G$ be a graph. Two vertices $x$ and $y$ of $G$ are connected by a
path if $G$ has an induced subgraph which is a path with $x$
and $y$ as terminals. Being connected by a path is an equivalence relation
on the set of vertices of the graph.
The equivalence classes are called the
components of $G$.
A cycle consists of a path with at least three vertices with
one additional edge that connects the two terminals of the path.
We denote a cycle with $n$ vertices by $C_n$.
The length of a path or a cycle is its number of edges.

\section{$\mathbf{k}$-Threshold graphs}
%%%%%%%%%%%%%%%%%%%%%%%%%%%%%%%%%%%%%%%

Threshold graphs were introduced in~\cite{kn:chvatal}
using a concept called `threshold dimension.'
There is a lot of information about threshold graphs in the
book~\cite{kn:mahadev}, and
there are chapters on threshold graphs
in the book~\cite{kn:golumbic2} and in the
survey~\cite{kn:brandstadt}.

There are many ways to define threshold graphs.
We choose the
following way~\cite{kn:chvatal,kn:brandstadt}.
An isolated vertex in a graph $G$ is a vertex without neighbors.
A universal vertex is a vertex that is adjacent to all other
vertices. A pendant vertex is a vertex with exactly one neighbor.

\begin{definition}
\label{def threshold}
A graph $G=(V,E)$ is a {\em threshold graph\/} if
every induced subgraph has an isolated vertex or a
universal vertex.
\end{definition}

A graph $G$ is a threshold graph if and only if
$G$ has no induced
$P_4$, $C_4$, nor $2K_2$~\cite{kn:chvatal}.

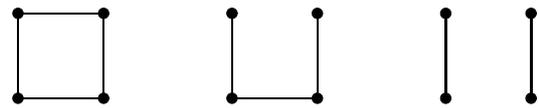
\begin{figure}
\setlength{\unitlength}{3.2pt}
\begin{center}
\begin{picture}(60,10)
\put(0,0){\circle*{1.2}}
\put(0,10){\circle*{1.2}}
\put(10,0){\circle*{1.2}}
\put(10,10){\circle*{1.2}}
\put(0,0){\line(1,0){10}}
\put(0,0){\line(0,1){10}}
\put(10,0){\line(0,1){10}}
\put(10,10){\line(-1,0){10}}

\put(25,0){\circle*{1.2}}
\put(25,10){\circle*{1.2}}
\put(35,0){\circle*{1.2}}
\put(35,10){\circle*{1.2}}
\put(25,0){\line(1,0){10}}
\put(25,0){\line(0,1){10}}
\put(35,0){\line(0,1){10}}

\put(50,0){\circle*{1.2}}
\put(50,10){\circle*{1.2}}
\put(60,0){\circle*{1.2}}
\put(60,10){\circle*{1.2}}
\put(50,0){\line(0,1){10}}
\put(60,0){\line(0,1){10}}
\end{picture}
\end{center}
\caption{$C_4$, $P_4$ and $2K_2$}
\end{figure}

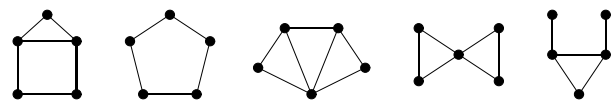
\begin{figure}[h]
\setlength{\unitlength}{1pt}
\begin{center}
\begin{picture}(220,30)(0,0)
\put(0,0){\circle*{3.4}} \put(0,20){\circle*{3.4}}
\put(11,30){\circle*{3.4}} \put(22,20){\circle*{3.4}}
\put(22,0){\circle*{3.4}} \put(0,0){\line(0,1){20}}
\put(0,0){\line(1,0){22}} \put(22,0){\line(0,1){20}}
\put(0,20){\line(1,0){22}} \put(0,20){\line(11,10){11}}
\put(0,20){\line(1,0){20}} \put(11,30){\line(11,-10){11}}

\put(42,20){\circle*{3.4}} \put(47,0){\circle*{3.4}}
\put(67,0){\circle*{3.4}} \put(72,20){\circle*{3.4}}
\put(57,30){\circle*{3.4}} \put(47,0){\line(-1,4){5}}
\put(47,0){\line(1,0){20}} \put(67,0){\line(1,4){5}}
\put(57,30){\line(3,-2){16}} \put(57,30){\line(-3,-2){16}}

\put(90,10){\circle*{3.4}} \put(110,0){\circle*{3.4}}
\put(100,25){\circle*{3.4}} \put(120,25){\circle*{3.4}}
\put(130,10){\circle*{3.4}} \put(90,10){\line(2,-1){20}}
\put(90,10){\line(2,3){10}} \put(100,25){\line(1,0){20}}
\put(100,25){\line(2,-5){10}} \put(120,25){\line(2,-3){10}}
\put(110,0){\line(2,5){10}} \put(110,0){\line(2,1){20}}

\put(150,5){\circle*{3.4}} \put(150,25){\circle*{3.4}}
\put(165,15){\circle*{3.4}} \put(180,5){\circle*{3.4}}
\put(180,25){\circle*{3.4}} \put(150,5){\line(0,1){20}}
\put(180,5){\line(0,1){20}} \put(150,5){\line(3,2){30}}
\put(150,25){\line(3,-2){30}}

\put(200,15){\circle*{3.4}} \put(220,15){\circle*{3.4}}
\put(210,0){\circle*{3.4}} \put(200,30){\circle*{3.4}}
\put(220,30){\circle*{3.4}} \put(200,30){\line(0,-1){15}}
\put(220,30){\line(0,-1){15}} \put(200,15){\line(1,0){20}}
\put(210,0){\line(2,3){10}} \put(210,0){\line(-2,3){10}}
\end{picture}
\end{center}
\caption{A house, a $C_5$, a gem, a butterfly, and a bull}
\label{fig:house_C5_gem}
\end{figure}

A decomposition tree for a graph $G=(V,E)$ is a pair $(T,f)$ where
$T$ is a rooted binary tree and where $f$ is a bijection from the
vertices of $G$ to the leaves of~$T$.

Let $G$ be a threshold graph. We can construct a decomposition tree
for $G$ as follows. For each internal node, including the root, the
right subtree consists of a single leaf. Each internal node is
labeled with a $\oplus$-operator or a $\otimes$-operator. Consider
an internal node and let $W$ be the set of vertices that are mapped
to leaves in the left subtree. The $\oplus$-operator adds the vertex
that is mapped to the leaf in the right subtree as an isolated
vertex to the graph induced by $W$. The $\otimes$-operator adds the
vertex that is mapped to the leaf in the right subtree as a
universal vertex to the graph induced by $W$.

\bigskip
Let $C=\{1,\ldots,k\}$ be a set of $k$ colors. A coloring of
a graph $G=(V,E)$ is a map $c:V \rightarrow C$.

\begin{definition}
A graph $G=(V,E)$ is a {\em $k$-threshold graph\/} if there is a
coloring of $G$ with $k$ colors
such that for every nonempty set
$W \subseteq V$ the graph $G[W]$
has a vertex which is either isolated, or a vertex $x$ which is
adjacent to exactly all vertices of color $i$ in $W-x$ for some
color $i$.
\end{definition}

A $k$-threshold graph $G$ has a decomposition tree $(T,f)$ where $T$
is a rooted binary tree and $f$ is a bijection from the leaves of
$T$ to the vertices of $G$. Each leaf has one color chosen from a
set of $k$ colors. For each internal node the right subtree consists
of a single leaf. Each internal node including the root is labeled
with a $\oplus$-operator or a $\otimes_i$-operator. The
$\oplus$-operator adds the vertex that is mapped to the leaf in the
right subtree as an isolated vertex. The $\otimes_i$-operator makes
the vertex of the right leaf adjacent exactly to the vertices of
color $i$ in the left subtree.

\bigskip
Consider the class of $k$-threshold graphs. Notice that the
class is hereditary; that is, if $G$ is a $k$-threshold graph
then so is every induced subgraph of $G$. Any hereditary class of graphs
is characterized by a set of forbidden induced subgraphs.
These are the minimal elements in the induced subgraph relation
that are not in the class.
For example, a graph is a threshold graph if and only if it has
no induced $2K_2$, $C_4$ or $P_4$.
In the following theorem we show that this characterization is
finite for $k$-threshold graphs for any fixed number $k$.

\begin{theorem}
\label{kruskal}
Let $k$ be a natural number.
The class of $k$-threshold graphs
is characterized by a finite collection of forbidden
induced subgraphs.
\end{theorem}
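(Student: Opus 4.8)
The plan is to reduce the statement to \emph{Higman's lemma}---the well-quasi-ordering of finite words over a finite alphabet under the (not necessarily contiguous) subsequence order---which one may equally well view as the instance of \emph{Kruskal's tree theorem} specialised to decomposition trees of the caterpillar shape described above. Since the class of $k$-threshold graphs is hereditary, its forbidden induced subgraphs are exactly the minimal non-members under the induced subgraph relation, and it suffices to prove there are finitely many of these. The first step is to linearise the decomposition tree: a $k$-threshold graph is built from a single colored vertex by repeatedly appending a vertex that is either isolated ($\oplus$) or adjacent to precisely the previously added vertices of some color $i$ ($\otimes_i$). I would record such a construction as a word $s$ over the finite alphabet $\Sigma$ of pairs (color, operation), with color in $\{1,\ldots,k\}$ and operation in $\{\oplus,\otimes_1,\ldots,\otimes_k\}$ (the operation of the first letter being irrelevant, as that vertex has no earlier neighbours). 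Then, using the discussion preceding the theorem, a graph $G$ is a $k$-threshold graph if and only if $G\cong G(s)$ for some $s\in\Sigma^{*}$, where $G(s)$ denotes the underlying (uncolored) graph produced by $s$. The key combinatorial observation is that the adjacency of two vertices $x_p,x_q$ of $G(s)$ with $p<q$ depends only on the color of $x_p$ and the operation of $x_q$; hence deleting a letter of $s$ deletes exactly the corresponding vertex and its incident edges, so whenever $s'$ is a subsequence of $s$, the graph $G(s')$ is the induced subgraph of $G(s)$ on the retained vertices. Thus the subsequence order on $\Sigma^{*}$ refines the induced subgraph order on $k$-threshold graphs.

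To handle a minimal forbidden subgraph $H$ itself---which is \emph{not} a $k$-threshold graph and so has no construction word---I would use the following encoding trick. Fix any vertex $v_0\in V(H)$; then $H-v_0$ is a $k$-threshold graph, so $H-v_0\cong G(s)$ for some $s\in\Sigma^{*}$, and $H$ is recovered by re-attaching $v_0$ adjacent to a prescribed vertex subset $N$ of $G(s)$. Encode the triple as the word obtained from $s$ by tagging each position with $1$ or $0$ according to whether that vertex lies in $N$; this is a word over the still finite alphabet $\Sigma\times\{0,1\}$. Exactly as before, if one tagged word is a subsequence of another respecting the tags, then the graph it encodes is an induced subgraph of the other (restrict to the retained $x$-vertices together with the re-attached vertex).

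Now apply Higman's lemma to $(\Sigma\times\{0,1\})^{*}$: the set $\mathcal{S}$ of all tagged words whose encoded graph is not a $k$-threshold graph has, under the subsequence order, only finitely many minimal elements $w_1,\ldots,w_m$, and every member of $\mathcal{S}$ lies above some $w_j$. Given a minimal obstruction $H$ as above, its encoding belongs to $\mathcal{S}$, hence lies above some $w_j$, so the graph $H_j$ encoded by $w_j$ is an induced subgraph of $H$. Since $H_j$ is not a $k$-threshold graph, it contains a forbidden induced subgraph of the class; but it is an induced subgraph of the minimal obstruction $H$, so minimality of $H$ forces $H_j\cong H$. Therefore every minimal forbidden induced subgraph is isomorphic to one of the finitely many graphs $H_1,\ldots,H_m$, which proves the theorem.

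The routine parts are the linearisation of the decomposition tree and the claim that subsequences model induced subgraphs; I expect the main obstacle to be the encoding of minimal obstructions in the previous paragraph. One must ensure the encoding lives over a \emph{finite} alphabet---so the subset $N$ must be recorded position-by-position along $s$ rather than as an unrestricted set---while remaining compatible with the induced subgraph order, and one must run the minimality argument correctly, the decisive point being that a \emph{proper} induced subgraph of a minimal obstruction is already a $k$-threshold graph. Minor bookkeeping (the irrelevant first operation, non-uniqueness of $s$, and the one-vertex graph) also has to be dispatched, but presents no difficulty.
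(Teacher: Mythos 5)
Your proof is correct, and it follows the route that the paper itself flags as an available alternative: where the paper applies Kruskal's tree theorem to the labeled rooted binary decomposition trees, you observe that these trees are caterpillars (every right subtree is a single leaf), linearise them into words over the finite alphabet of (color, operation) pairs, and invoke Higman's lemma instead. The structural core is identical in both arguments --- delete one vertex $r_i$ (your $v_0$) from each minimal obstruction, take a decomposition of the remaining $k$-threshold graph, and append to each leaf/position an extra $0/1$ label recording adjacency to the deleted vertex, so that the well-quasi-order on the encodings refines the induced subgraph order on the obstructions themselves. Your key observation --- that adjacency of $x_p$ and $x_q$ with $p<q$ depends only on the color of $x_p$ and the operation of $x_q$, so deleting letters yields induced subgraphs --- is exactly what makes the linearisation sound, and it is the reason the full strength of Kruskal's theorem is not needed here; in that sense your version is the more elementary one. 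The endgame also differs cosmetically: the paper derives a contradiction from an assumed infinite antichain of minimal obstructions, whereas you take the finitely many minimal elements of the set of ``bad'' tagged words and argue that every minimal obstruction must coincide with one of the graphs they encode; the two are equivalent given well-foundedness of the subsequence order. No gaps.
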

\begin{proof}
To prove this theorem we use the technique
introduced by Pouzet~\cite{kn:pouzet}.

Let $T_1,T_2,\ldots$ be a collection of
rooted binary trees with points labeled from some finite set.
We write $T_i \prec T_j$ if there exists an injective map
$h$ from the points of $T_i$ to the points of $T_j$
such that
\begin{enumerate}[\rm 1.]
\item the label of a point $a$ in $T_i$ is equal to the
label of the point $h(a)$ in $T_j$, and
\item for every pair of points $a$ and $b$ in $T_i$, their lowest
common ancestor is mapped to the lowest common ancestor of $h(a)$
and $h(b)$ in $T_j$, and

\item  if $a$ and $b$ are points of $T_i$ with lowest common
ancestor $c$ such that $a$ is in the left subtree of $c$ and $b$ is
in the right subtree of $c$ then $h(a)$ is in the left subtree of
$h(c)$ and $h(b)$ is in the right subtree of $h(c)$ in $T_j$.
\end{enumerate}

Let $T_1,T_2,\ldots$ be an infinite sequence of rooted binary trees
with points labeled from some finite set. Kruskal's
theorem~\cite{kn:kruskal} states that there exist integers $i < j$
such that $T_i \prec T_j$.

Assume that the class of
$k$-threshold graphs has an infinite collection of
minimal forbidden induced subgraphs, say $G_1,G_2,\ldots$.
In each $G_i$ single out one vertex $r_i$ and let
$G_i^{\prime}=G_i-r_i$. Then $G_i^{\prime}$ is a $k$-threshold graph.
For each $i$ consider a
decomposition tree $(T_i,f_i)$ for $G_i^{\prime}$
as described above. We add one more
label to each leaf of $T_i$. This label is $1$ if the vertex that is
mapped to that leaf is adjacent to $r_i$ and it is $0$ otherwise.

When we apply Kruskal's theorem to the labeled binary trees $T_i$
that represent the graphs $G_i^{\prime}$ we may conclude that there
exist $i < j$ such that $G_i^{\prime}$ is an induced subgraph of
$G_j^{\prime}$. By virtue of the extra label we have also that $G_i$
is an induced subgraph of $G_j$. This is a contradiction because we
assume that the graphs $G_i$ are minimal forbidden induced
subgraphs. This proves the theorem.
\end{proof}

Higman's lemma~\cite{kn:higman}
preludes Kruskal's theorem. It deals with
finite sequences over a finite alphabet instead of trees.
Instead of Kruskal's theorem we could have used Higman's lemma
to prove Theorem~\ref{kruskal}.

\bigskip
Let $\mathcal{O}_k$ be the set of forbidden induced subgraphs
for $k$-threshold graphs. The set $\mathcal{O}_k$ is called the
obstruction set for $k$-threshold graphs.

\bigskip
The most natural way to express and classify graph-theoretic problems
is by means of logic. In monadic second order logic a (finite) sentence is a
formula that uses quantifiers $\forall$ and $\exists$. The quantification
is over vertices, edges, and subsets of vertices and edges. Relational
symbols are $\neg$, $\in$, $=$, $\mbox{and}$, $\mbox{or}$, $\subseteq$,
$\cup$,
$\cap$, and the logical implication
$\Rightarrow$. Some of these are superfluous.
Although the minimization or maximization of the
cardinality of a subset is not part of the logic, one usually includes
them.

A restricted form of this logic is where one
does not allow quantification over subsets of edges.
The $C_2MS$-logic is such a restricted monadic second-order logic
where one can furthermore use a test whether the cardinality of a subset
is even or odd.

Courcelle proved that problems that can be expressed in $C_2MS$-logic
can be solved in $O(n^3)$ time for graphs of bounded rankwidth
(see~\cite{kn:courcelle3,kn:hlineny}).
Consider a decomposition tree $(T,f)$
for a graph $G=(V,E)$. Let $\ell$ be a line in this tree. Let $V_{\ell}$
be the set of vertices that are mapped to the leaves in the subtree
rooted at $\ell$. The cutmatrix of $\ell$ is the submatrix of the
$0/1$-adjacency matrix of $G$ that has rows indexed by the vertices of
$V_{\ell}$ and that has columns indexed by the vertices of $V-V_{\ell}$.
A graph has rankwidth $k$ if every cutmatrix has rank over $GF[2]$
at most $k$. For each natural number $k$ there exists an $O(n^3)$
algorithm that constructs a decomposition tree for a graph
$G$ of rankwidth $k$ if it exists~\cite{kn:hlineny2}.

\bigskip
Let $(T,f)$ be a decomposition tree for a $k$-threshold graph. Then
every cutmatrix is of the following shape:
\[\begin{pmatrix} I & 0 \\ 0&0 \end{pmatrix}\]
where $I$ is a submatrix of the $k \times k$ identity matrix.
By `having this shape' we mean that we left out multiple
copies of the same row or column.
It follows that $k$-threshold graphs have rankwidth $k$.

\begin{theorem}
Let $k$ be a natural number.
There exists an $O(n^3)$ algorithm
which checks whether a graph $G$ with $n$
vertices is a $k$-threshold graph.
\end{theorem}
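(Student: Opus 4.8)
The plan is to combine three ingredients recalled above: that every $k$-threshold graph has rankwidth at most $k$; that a decomposition tree of width at most $k$ can be computed in $O(n^3)$ time whenever the input graph has rankwidth at most $k$~\cite{kn:hlineny2}; and Courcelle's theorem, which decides any $C_2MS$-expressible property in $O(n^3)$ time on a graph supplied together with a bounded-width decomposition tree~\cite{kn:courcelle3,kn:hlineny}. The only thing left to provide is a $C_2MS$ sentence $\varphi_k$ that holds in a graph $G$ exactly when $G$ is a $k$-threshold graph.

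For $\varphi_k$ the cleanest route is to invoke Theorem~\ref{kruskal}: the obstruction set $\mathcal{O}_k$ is finite, and each $H\in\mathcal{O}_k$ has a fixed number of vertices. The statement ``$G$ has no induced subgraph isomorphic to $H$'' is a first-order sentence asserting the non-existence of that many pairwise distinct vertices whose mutual adjacencies match those prescribed by $H$; it uses only vertex quantifiers and the adjacency relation, hence lies in $C_2MS$. The conjunction of these sentences over all $H\in\mathcal{O}_k$ is the desired $\varphi_k$. Alternatively one can write $\varphi_k$ straight from the definition of $k$-threshold graph: existentially quantify colour classes $C_1,\dots,C_k$ forming a partition of $V$, and then assert that every nonempty $W\subseteq V$ contains a vertex $x$ that is isolated in $G[W]$ or satisfies $N(x)\cap(W-x)=C_i\cap(W-x)$ for some $i$. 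This quantifies only over vertices and vertex subsets, so it is again $C_2MS$; its merit is that it is explicit and does not require first computing $\mathcal{O}_k$.

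The recognition algorithm then proceeds in two phases. First run the algorithm of~\cite{kn:hlineny2} for rankwidth $k$. If it certifies that $G$ has rankwidth greater than $k$, then since $k$-threshold graphs have rankwidth at most $k$ we answer ``no''. Otherwise it returns a decomposition tree of width at most $k$, which we hand, together with $G$, to Courcelle's algorithm for $\varphi_k$; this decides in $O(n^3)$ time whether $G\models\varphi_k$, i.e.\ whether $G$ is a $k$-threshold graph. Both phases run in $O(n^3)$ time, so the whole procedure is $O(n^3)$.

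The step that genuinely needs attention is confirming that the property is expressible without quantifying over \emph{sets of edges} --- the precise respect in which $C_2MS$-logic is weaker than full monadic second-order logic. Both formulations above clear this bar (the obstruction-set version is purely first-order, and the direct version quantifies only over vertices and subsets of vertices), so this obstacle is mild. The real content of the theorem is the rankwidth bound on $k$-threshold graphs, combined with the algorithmic metatheorems cited above.
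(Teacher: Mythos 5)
Your proposal is correct and follows essentially the same route as the paper: bound the rankwidth of $k$-threshold graphs by $k$, invoke the $O(n^3)$ rank-decomposition algorithm and Courcelle's metatheorem for $C_2MS$-logic, and express the property either directly from the definition or via the finite obstruction set of Theorem~\ref{kruskal}. The paper gives both of these formulations as alternative proofs, just as you do, so there is nothing to add.
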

\begin{proof}
$k$-Threshold graphs have rankwidth $k$.
$C_2MS$-Problems can be solved in $O(n^3)$
time for graphs of bounded rankwidth%
~\cite{kn:courcelle3,kn:hlineny2,kn:oum4}. The definition
of $k$-threshold graphs can be formulated in this logic.

An other proof goes as follows. Let $\mathcal{O}_k$ be the
obstruction set for $k$-threshold graphs. We can formulate the
existence of a graph in $\mathcal{O}_k$ as an induced subgraph in
$C_2MS$-logic. Note that this proof is non-constructive; Kruskal's
theorem does not provide the forbidden induced subgraphs.
\end{proof}

\section{Special 2-threshold graphs}
%%%%%%%%%%%%%%%%%%%%%%%%%%%%%%%%%%%%

There are many drawbacks to the solution given in the previous
section. The algorithm that constructs a decomposition tree of
bounded rankwidth is far from easy. By the way, it is described in
terms of matroids instead of graphs. Furthermore, the constants that
are involved quickly grow out of space when $k$ increases.

\bigskip
Notice that threshold graphs can be recognized by sorting the
vertices according to increasing degrees. Either the vertex with
highest degree is a universal vertex or the vertex with lowest degree
is an isolated vertex. Delete such a vertex which is either
isolated or universal and repeat the process. The graph is a
threshold graph if and only if this process ends with a single
vertex.

\bigskip
In the remainder of this paper we restrict our attention to the case
where $k=2$. We consider colorings of vertices with two colors, say
black and white. If $G$ is a 2-threshold graph with a
black-and-white coloring of the vertices then the black vertices
induce a threshold graph and the white vertices induce a threshold
graph.

\begin{definition}
A 2-threshold graph is {\em special\/} if it has a black-and-white
coloring and a decomposition tree with only $\oplus$- and
$\otimes_w$-operators.
\end{definition}

A clique in a graph is a nonempty subset of
vertices such that every pair in it is adjacent.
An independent set in a graph is a nonempty subset of
vertices with no edges between them.

%Let $x$ be a vertex of a graph $G=(V,E)$. The closed neighborhood
%of $x$ is the set $N[x]=N(x)+x$.
%The anti-neighborhood of $x$ is the set
%\[\Delta(x)=V-N[x].\]

A graph is bipartite if there is a partition of the vertices into
two independent sets. One part of the partition may be empty. A
bipartite graph is complete bipartite if any two vertices in
different parts of the partition are adjacent.

A threshold order of a graph is a linear ordering
$[v_1,\ldots,v_n]$ of the vertices such that for every
pair $v_i$ and $v_j$ with $i < j$
\begin{eqnarray*}
N(v_i) \subseteq N(v_j) && \mbox{if $v_i$ and $v_j$ are nonadjacent}\\
\mbox{and}\quad N[v_i] \subseteq N[v_j] &&\mbox{if $v_i$ and $v_j$ are
adjacent.}
\end{eqnarray*}
A graph is a threshold graph if and only if it has a
threshold order~\cite{kn:mahadev}. This implies that the vertices
of a threshold graph can be partitioned into a clique (the higher
degree vertices)
and an independent set (the lower degree vertices).
Note however, that this partition is not exactly unique.

\begin{figure}
\setlength{\unitlength}{1.8pt}
\begin{center}
\begin{picture}(110,20)(0,35)

%1 Octahedron
\put(2.5,45){\circle*{2}} \put(7.5,45){\circle*{2}}
\put(12.5,45){\circle*{2}} \put(17.5,45){\circle*{2}}
\put(10,35){\circle*{2}} \put(10,55){\circle*{2}}
\put(10,35){\line(-3,4){7.5}} \put(10,35){\line(-1,4){2.5}}
\put(10,35){\line(3,4){7.5}} \put(10,35){\line(1,4){2.5}}
\put(10,55){\line(-3,-4){7.5}} \put(10,55){\line(-1,-4){2.5}}
\put(10,55){\line(3,-4){7.5}} \put(10,55){\line(1,-4){2.5}}
\put(2.5,45){\line(1,0){15}} \qbezier(2.5,45)(10,37)(17.5,45)

%2 net
\put(30,35){\circle*{2}} \put(35,40){\circle*{2}}
\put(40,50){\circle*{2}} \put(40,55){\circle*{2}}
\put(45,40){\circle*{2}} \put(50,35){\circle*{2}}
\put(35,40){\line(-1,-1){5}} \put(35,40){\line(1,0){10}}
\put(35,40){\line(1,2){5}} \put(40,50){\line(0,1){5}}
\put(45,40){\line(1,-1){5}} \put(45,40){\line(-1,2){5}}

%3 4-wheel with a pendant adjacent to the center
\put(65,35){\circle*{2}} \put(65,50){\circle*{2}}
\put(80,35){\circle*{2}} \put(80,50){\circle*{2}}
\put(72.5,42.5){\circle*{2}} \put(72.5,55){\circle*{2}}
\put(72.5,55){\line(0,-1){12.5}} \put(72.5,42.5){\line(1,1){7.5}}
\put(72.5,42.5){\line(-1,-1){7.5}} \put(72.5,42.5){\line(-1,1){7.5}}
\put(72.5,42.5){\line(1,-1){7.5}} \put(65,35){\line(1,0){15}}
\put(65,35){\line(0,1){15}} \put(80,50){\line(-1,0){15}}
\put(80,50){\line(0,-1){15}}

%Diamond  with a pendant adjacent to each vertex of degree three
\put(95,35){\circle*{2}} \put(95,42.5){\circle*{2}}
\put(102.5,35){\circle*{2}} \put(102.5,50){\circle*{2}}
\put(110,35){\circle*{2}} \put(110,42.5){\circle*{2}}
\put(95,35){\line(0,1){7.5}} \put(110,35){\line(0,1){7.5}}
\put(95,42.5){\line(1,0){15}} \put(95,42.6){\line(1,1){7.5}}
\put(95,42.5){\line(1,-1){7.5}} \put(110,42.5){\line(-1,1){7.5}}
\put(110,42.5){\line(-1,-1){7.5}}

\end{picture}
\end{center}
\caption{A octahedron, a net, a 4-wheel with a pendant adjacent to
the center, and a diamond with a pendant vertex adjacent to each
vertex of degree three}\label{fig:FIS_special_class}
\end{figure}
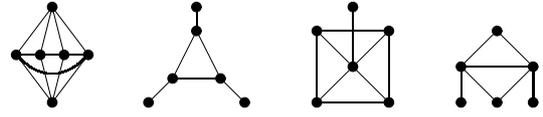

\begin{theorem}
\label{char special}
A graph $G=(V,E)$ is a special 2-threshold graph if and only if
it has no induced $2K_2$, $C_5$, net, house, gem, octahedron, or
a 4-wheel with a pendant vertex adjacent to the center,
or a diamond with a pendant vertex
attached to each vertex of degree three.
\end{theorem}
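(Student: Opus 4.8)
The plan is to prove the two directions separately, the forward one being routine and the converse carrying the weight. For the forward direction I would first note that the class of special $2$-threshold graphs is hereditary: from a decomposition tree one deletes the leaf of a removed vertex and suppresses its parent (now a node with a single child), obtaining a decomposition tree of the induced subgraph that still uses only $\oplus$- and $\otimes_w$-operators. So it is enough to verify that none of the eight listed graphs is a special $2$-threshold graph, and for this I would use the following necessary condition, read off any such tree: since there is no $\otimes_b$-operator, each black vertex is introduced either isolated or adjacent to exactly the white vertices present at that step; hence in a special $2$-threshold graph with a black-and-white coloring the black vertices are independent, the white set $W$ induces a threshold graph and is a vertex cover, and the neighborhoods $N(b)$, $b$ black, are pairwise comparable under inclusion and are simultaneously initial segments of a single construction order of $G[W]$. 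For each of the eight graphs a short, symmetry-reduced case analysis over colorings shows these conditions are incompatible: comparability rules out most colorings, and the initial-segment condition is what dispatches $2K_2$, the net, the house and the gem.

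For the converse I would induct on $|V(G)|$. If $G$ has an isolated vertex, color it black and prepend a $\oplus$-operator to a decomposition tree of $G$ minus that vertex (which exists by induction, the hypothesis being hereditary); if $G$ is disconnected with two non-trivial components it contains an induced $2K_2$, contrary to assumption; so $G$ may be taken connected with at least two vertices. The engine of the induction is the dual statement: in a decomposition tree of such a graph the last vertex $v$ added is either a white vertex adjacent to exactly the remaining white vertices and to no black vertex, or a black vertex adjacent to exactly the white vertices; in both cases $G-v$ is a special $2$-threshold graph whose white set equals $N_G(v)$, and conversely, from any such $G-v$ one recovers $G$ by adding $v$ at the root with a $\otimes_w$-operator (coloring $v$ black or white at will). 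Thus it suffices, for a connected $G$ with no forbidden induced subgraph, to produce a vertex $v$ such that $G-v$ has a black-and-white coloring with white set exactly $N_G(v)$; this is established after strengthening the induction hypothesis to the analogous statement for colored graphs.

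To locate such a $v$ I would work with a vertex $u$ of maximum degree. In a connected $2K_2$-free graph every vertex of $V-N[u]$ has a neighbor in $N(u)$ --- otherwise a neighbor of such a vertex would be adjacent to all of $N(u)$ and exceed the degree of $u$ --- and the eight forbidden graphs sharply limit the adjacencies within $N(u)$ and across the cut $(N(u),\, V-N[u])$. The plan is then to show, case by case, that either one of $N[u]$, $N(u)$, or an explicitly described refinement of one of these is forced to be the white set of $G$, in which case a vertex $v$ of the needed shape is available (a vertex universal inside the prospective white set, or a suitably placed vertex outside it), or else an induced copy of one of the eight graphs turns up: a black vertex whose neighborhood is incomparable with another's, together with enough of the threshold part, yields the octahedron, the $4$-wheel with a pendant at the center, or the diamond with pendants; $C_5$, the net, the house and the gem obstruct the local failures of the white part to be threshold and of the initial-segment condition; and $2K_2$ has already handled the disconnected sub-cases.

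I expect this last step to be the main obstacle: it is essentially the assertion that the eight listed graphs are exactly the minimal obstructions, and pinning it down calls for a careful and rather lengthy case analysis --- perhaps most transparently carried out on a minimal counterexample $G$, for which $G-v$ is special $2$-threshold for every vertex $v$, by showing that $G$ must then be one of the eight graphs. Once the white set has been identified, extracting the colors and the $\oplus$-/$\otimes_w$-labels bottom-up along the resulting construction order is routine.
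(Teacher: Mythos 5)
The forward direction of your proposal is sound: the hereditary observation, and the necessary conditions you extract from a $\oplus/\otimes_w$-tree (black vertices independent, the white set a vertex cover inducing a threshold graph, black neighborhoods nested and realized as initial segments of a construction order of the white part) do suffice to kill all eight graphs, and this matches the paper, which simply asserts this direction is easy to check.

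The converse, however, has a genuine gap: everything that makes the theorem true is deferred to ``a careful and rather lengthy case analysis'' that you never carry out. Two specific problems. First, your inductive engine requires producing a vertex $v$ such that $G-v$ admits a special coloring whose white set is \emph{exactly} $N_G(v)$; closing that induction needs a characterization of which colorings (or at least which white sets) witness specialness of $G-v$, i.e.\ a partitioned version of the theorem, which is strictly stronger than the statement being proved. You acknowledge this (``after strengthening the induction hypothesis to the analogous statement for colored graphs'') but never formulate the strengthened hypothesis, so the induction cannot be checked. Second, the step that locates $v$ from a maximum-degree vertex $u$ --- showing that the eight exclusions force $N(u)$, $N[u]$, or some refinement to be the white set, or else exhibit a forbidden subgraph --- is precisely the content of the theorem, and you explicitly flag it as the main obstacle rather than proving it. The paper's proof takes a different and fully worked-out route: it invokes the Bacs\'o--Tuza theorem that a connected $\{P_5,C_5\}$-free graph has a dominating clique (applicable since $2K_2$-free implies $P_5$-free), chooses a dominating clique $C$ of maximum size maximizing the number of edges to $V-C$, and then proves a long chain of structural claims --- $G-C$ is a bipartite difference graph, $G[B+C]$ and $G[W+C]$ are threshold, all relevant neighborhoods are nested, every vertex of $C$ has a neighbor outside $C$ --- culminating in an explicit global linear order from which the coloring and the operators are read off. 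Nothing playing the role of the dominating clique, or of the global order, appears in your proposal, so the sufficiency direction remains unestablished.
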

\begin{proof}
It is easy to check that the listed subgraphs are in the
obstruction set.
We prove the sufficiency.
If a connected graph has no $P_5$ nor $C_5$ then it has a
dominating clique~\cite{kn:bacso,kn:brandstadt}.
That is, it has a clique $C$ such that every vertex outside $C$ has
at least one neighbor in $C$. Since the graph has no $2K_2$ it has
also no $P_5$ and, since there is also no $C_5$
there is a dominating clique.

Since there is no
$2K_2$ there is at most one component with more than one vertex.
If there is a component with more than one vertex then assume that
there is a decomposition tree for that component. Otherwise start with
a decomposition tree that consists of a single leaf and map any of the
isolated vertices to that leaf. Color the vertex black.
We add the isolated vertices one by one as black vertices as follows.
Add a new root to the decomposition tree with a
$\oplus$-operator.
Add the isolated vertex as a right child and the original
root as a left child.

Henceforth assume that $G$ is connected.

Let $C$ be a
dominating clique of maximal cardinality. Among the dominating
cliques of maximal cardinality choose $C$ such that the number of
edges with one endpoint in $C$ and the other endpoint in $V-C$
is maximal.

We claim that $V=C$ or that
$G-C$ is bipartite. For that, it is sufficient to show that there
is no triangle. Assume that there is a triangle $\{x,y,z\}$ in
$V-C$.
Let $X$, $Y$ and $Z$ be the vertices in $C$ that are adjacent to $x$, $y$
and $z$ respectively, but that are not adjacent to any other
vertex of the triangle $\{x,y,z\}$.
Let
\[X_2=(N(y) \cap N(z) \cap C)\setminus N(x)\]
and define $Y_2$ and $Z_2$ similarly. Let $N$ be the set of
vertices in $C$ adjacent to all three of $x$, $y$ and $z$.
Since there is no gem, there are no edges
between
$X$ and $Y_2$, nor between $X$ and $Z_2$, nor between
$Y$ and $X_2$, nor between $Y$ and $Z_2$, nor between $Z$ and $X_2$, nor
between $Z$ and $Y_2$. Since $C$ is a clique
at least one of every pair of these is empty.
Since there is no house, there are no edges
between $X$ and $Y$, nor between $X$ and $Z$, nor between $Y$ and $Z$.
Thus at least one of every pair is empty.
Assume that $Y=Z=\es$.
If $X \neq \es$ then $Y_2=Z_2=\es$.

There is at most one vertex in $C$ which is not adjacent to
$x$ or $y$ or $z$, otherwise there is a $2K_2$. Assume that there
is such a vertex $p$. Then $X=\es$, otherwise there
is a $2K_2$; let $\epsilon \in X$ then the edges
$(\epsilon,p)$ and $(y,z)$ form a
$2K_2$. At least one of $X_2$, $Y_2$ and $Z_2$ is empty
otherwise there is an octahedron. Say that $X_2=\es$.
Obviously, $C \neq \{p\}$ since
$C$ is a dominating clique and $p$ is not adjacent to $x$, $y$ or $z$.
Assume that there is a vertex $p^{\prime} \not\in C$
that is adjacent to $p$ but not adjacent to $x$.
Then $p^{\prime}$ is adjacent to $y$ and to $z$ otherwise
there is a $2K_2$. If $p^{\prime}$ is adjacent to some vertex of
$Y_2$ and to some vertex of $Z_2$ then there is an octahedron.
Assume that $p^{\prime}$ is not adjacent to any vertex of $Y_2$.
If $Y_2 \neq \es$, then there is a gem in $Y_2+\{z,x,y,p^{\prime}\}$.
Thus $Y_2=\es$. If there is a vertex in $Z_2$ that is adjacent
to $p^{\prime}$ then there is a gem contained in
$Z_2+\{p^{\prime},z,y,p\}$. So $p^{\prime}$ is nonadjacent to all
vertices of $Z_2$. If $Z_2\neq \es$, then there is a gem induced by
$Z_2+\{y,p^{\prime},z,x\}$. Thus $Z_2=\es$. It follows that all
vertices of $C-p$ are adjacent to $x$, to $y$ and to $z$.
If some vertex of $N$ is adjacent to $p^{\prime}$ then there is a
gem contained in $N+\{p,p^{\prime},y,x\}$. Thus $p^{\prime}$ is
nonadjacent to all vertices of $N$. Now there is a house
induced by $N+\{y,p^{\prime},p,x\}$, which is a contradiction.

Now assume that all neighbors of $p$ are also adjacent to $x$.
We can replace $p$ in $C$ by $x$. This gives a dominating clique
$C^{\prime}$ of the same cardinality as $C$. The number of
edges with one endpoint in $C$ and the other endpoint in $V-C$
increases since $x$ is furthermore adjacent to $y$ and $z$.

Assume that every vertex in $C$ is adjacent to at least one
vertex of $x$, $y$ and $z$. Assume $X \neq \es$. Then it contains
only one vertex $X=\{p\}$ otherwise there is a $2K_2$.
Furthermore, $Y_2=Z_2=\es$. Consider a neighbor $p^{\prime}\neq x$ of
$p$. If $p^{\prime}$ is not adjacent to $y$ and not adjacent
to $z$ there is a $2K_2$ namely $\{(y,z),(p,p^{\prime})\}$.
If $p^{\prime}$ is adjacent to $y$ and not adjacent to $z$
there is a house or a gem induced by $\{z,p,p^{\prime},x,y\}$.
Thus every neighbor of $p$ is a neighbor of both $y$ and $z$.
Notice that $N+X_2 \neq \es$, otherwise $y$ and $z$ have no
neighbor in $C$. If we replace $p$ in $C$ by $\{y,z\}$ we obtain
a dominating clique of larger cardinality which is a contradiction.

Assume that $X=\es$. Since there is no octahedron at least one
of $X_2$, $Y_2$ and $Z_2$ is empty. Without loss of generality
assume that $X_2=\es$.
Then $C+\{x\}$ is also a (dominating) clique which contradicts the
maximal cardinality of $X$.

This proves the claim that either $V-C$ is empty or that $G[V-C]$
is bipartite.

Since there is no $2K_2$ the bipartite graph is a
difference graph~\cite{kn:hammer2}. Every induced subgraph without isolated
vertices has a vertex in each side of the bipartition that is adjacent to
all the vertices in the other side of the bipartition.

Assume that there is a component with more than one vertex.
There is a unique partition of the vertices into two independent sets.
Call these sets $B$ (black) and $W$ (white). We prove that $G[B+C]$
and $G[W+C]$ are
threshold graphs. Assume that there are two black vertices $x$ and
$y$ that have private neighbors $x^{\prime}$ and $y^{\prime}$ in $C$.
That is, $x^{\prime}$ is a neighbor of $x$ but not of $y$ and
$y^{\prime}$ is a neighbor of $y$ but not of $x$.
Since $x$ and $y$ are in a component of $G[B+W]$ there is an
alternating path
$P=[x,x_1,\ldots,x_k,y]$
of black and white vertices.
Assume $k=1$. If $x_1$ is not adjacent to $x^{\prime}$ and
not adjacent to $y^{\prime}$ there is a $C_5$ which is a contradiction.
If $x_1$ is adjacent to exactly one of $x^{\prime}$ and $y^{\prime}$
then there is a house. If $x_1$ is adjacent to both $x^{\prime}$
and $y^{\prime}$ then there is a gem.

We proceed by induction. Assume that there is a black vertex $x_i$ in $P$
that is adjacent to exactly one of $x^{\prime}$ and $y^{\prime}$.
Then we obtain a contradiction by considering one of the two
subpaths $[x,x_1,\ldots,x_i]$ and $[x_i,\ldots,x_k,y]$. Assume that
every black vertex in $P$, except $x$ and $y$,
is adjacent to both or to neither of
$x^{\prime}$ and $y^{\prime}$. Assume that there exist two white
vertices $x_i$ and $x_j$ in $P$ such that $x_i$ is adjacent to
$x^{\prime}$ but not to $y^{\prime}$ and $x_j$ is adjacent to
$y^{\prime}$ but not to $x^{\prime}$. Since
$x_i$ and $x_j$ are both white they are not adjacent.
Then we obtain a contradiction
by considering the subpath of $P$ with terminals $x_i$ and $x_j$.
So we can assume that all the white vertices in $P$ are adjacent to
both or neither of $x^{\prime}$ and $y^{\prime}$ or, that they
all have the same neighbor in $\{x^{\prime},y^{\prime}\}$.
Assume that all the white vertices are adjacent to $x^{\prime}$
but not to $y^{\prime}$. If the black vertex $x_2$ is not adjacent to
$x^{\prime}$ then
$\{x^{\prime},x,x_1,x_2,x_3\}$ induces a house. If
$x_2$ is adjacent to $x^{\prime}$ then this set induces a gem.
The case where all the white vertices are adjacent to $y^{\prime}$ but
not to $x^{\prime}$ is similar. Assume that all the white vertices
are adjacent to both or neither of $x^{\prime}$ and $y^{\prime}$.
Every edge of $P$ must have at least one endpoint
adjacent to one of $x^{\prime}$ or $y^{\prime}$
otherwise there is a $2K_2$.
Assume that all the white vertices are adjacent to both $x^{\prime}$
and $y^{\prime}$. Then
$\{x^{\prime},x,x_1,x_2,x_3\}$ induces a gem or a house.
Let $x_i$ be the first white vertex in $P$ that is not adjacent to
$x^{\prime}$ nor to $y^{\prime}$. Assume $i=1$.
Then $\{x^{\prime},y^{\prime},x,x_1,x_2\}$ induces a house or there
is a $C_5$ or a $2K_2$. The case where $i=k$ is similar.
Assume that $1 < i < k$. Then $x_{i-2}$, $x_{i-1}$ and
$x_{i+1}$ are adjacent to $x^{\prime}$.
Now $\{x^{\prime},x_{i-2},x_{i-1},x_i,x_{i+1}\}$ induces a house.

This proves the claim that $G[B+C]$ and $G[W+C]$ are threshold graphs.

We prove that the neighborhoods of the black vertices and the
neighborhoods of the white vertices
are ordered by inclusion. Assume that there exist two black vertices
$x$ and $y$ with private neighbors $x^{\prime} \in C$ and
$z \in W$. Since there is no $2K_2$ the vertex $z$ is adjacent
to $x^{\prime}$. Because $C$ is a dominating clique the vertex
$y$ has a neighbor $y^{\prime} \in C$. The vertex $y$ is not
adjacent to $x^{\prime}$ and this implies that
$y^{\prime} \neq x^{\prime}$.
Since the neighborhoods in $C$
of the black vertices are ordered by inclusion and since $y$
is not adjacent to $x^{\prime}$, $x$ is adjacent to $y^{\prime}$.
If $z$ is adjacent to $y^{\prime}$ then
$\{y^{\prime},x,x^{\prime},z,y\}$ induces a gem. If $z$ is not
adjacent to $y^{\prime}$ then this set induces a house.
Similarly it holds true that no two white vertices have private neighbors.

This proves the claim that the neighborhoods of the black
vertices and the neighborhoods of the white vertices are
ordered by inclusion.

Consider a vertex $x \in C$ that has white neighbors and black
neighbors. We prove that every white neighbor is adjacent to
every black neighbor. Let $\alpha$ be a black neighbor and let
$\beta$ be a white neighbor and assume that $\alpha$ and $\beta$
are not adjacent. Since $\alpha$ and $\beta$ are in a component
of $G-C$ there is an alternating path $P=[\alpha,x_1,\ldots,x_k,\beta]$
of black and white vertices.
Since the neighborhoods of the black vertices are ordered
by inclusion there is at most one black vertex $x_i$ on this
path {\em i.e.\/}, $i=2$.
Similarly, there is at most one white vertex $x_j$ on this path,
{\em i.e.\/}, $j=1$.
The black vertex $\alpha$ is not adjacent to $\beta$ and
the black vertex $x_2$ is adjacent to
$\beta$ and either $N(\alpha) \subseteq N(x_2)$
or $N(x_2) \subseteq N(\alpha)$.
This implies that $N(\alpha) \subset N(x_2)$. Thus
$x_2$ is adjacent to $x$. Similarly $x_1$ is adjacent to $x$.
Thus $P+x$ induces a gem, which is a contradiction.

Let $x$ and $y$ be two vertices in $C$ that have neighbors in $B$.
We prove that
\[\text{if~} \es \neq N(y) \cap B \subset N(x) \cap B
\text{~then~} N(x) \cap W \subseteq N(y) \cap W.\] Assume that $x$
has a neighbor $p$ in $B$ that is not adjacent to $y$. Let $q$ be a
neighbor of $y$ in $B$. Let $r$ be a neighbor of $x$ in $W$ that is
not adjacent to $y$. Since $G[B+C]$ is a threshold graph and since
$p$ is not adjacent to $y$ the vertex $q$ is adjacent to $x$. By the
previous observation the vertices $p$ and $q$ are both adjacent to
$r$. This implies that $\{x,y,q,r,p\}$ induces a gem.

Let $C^{\ast} \subseteq C$ be the set of vertices in $C$ that have
no neighbors in $B$ or in $W$. We prove that $C^{\ast}=\es$.
Assume that $C^{\ast} \neq \es$. Then $|C^{\ast}|=1$ otherwise there
is a $2K_2$ since there is an edge in $G[B+W]$. Let $C^{\ast}=\{c\}$.
Let $p$ be a black vertex with a minimal neighborhood and let
$q$ be a white vertex with a minimal neighborhood. Assume that
$p$ and $q$ are not adjacent. Let $p^{\prime} \in C$ be a
neighbor of $p$ and let $q^{\prime} \in C$ be a neighbor
of $q$. Then $p^{\prime} \neq q^{\prime}$ otherwise $p$ and $q$
are adjacent.
Let $a$ be a black vertex with a maximal neighborhood and let
$b$ be a white vertex with a maximal neighborhood. Then $a$
and $b$ are adjacent. Assume that
$a \neq p$ and $b \neq q$. The vertex $a$ is adjacent to $p^{\prime}$
and the vertex $b$ is adjacent to $q^{\prime}$. If $a$ is not adjacent to
$q^{\prime}$ and $b$ is not adjacent to $p^{\prime}$ then
$\{a,b,q^{\prime},p^{\prime},c\}$ induces a house.
If $a$ is adjacent to
$q^{\prime}$ then
$a$ is adjacent to $q$ and
$\{q^{\prime},q,a,p^{\prime},c\}$ induces a gem.
Assume that $a=p$, {\em i.e.\/}, assume that there is only one
black vertex. Then $p$ is adjacent to $q$ because $G[B+W]$ is
connected. This contradicts the assumption that $p$ and $q$ are not
adjacent.
Assume that $p$ and $q$ are adjacent. Then $G[B+W]$ is
complete bipartite.
Assume that $p^{\prime} \neq q^{\prime}$.  If $p$ is not
adjacent to $q^{\prime}$ and $q$ is not adjacent to $p^{\prime}$ then
$\{c,p,q,q^{\prime},p^{\prime}\}$
induces a house. If $p$ is adjacent to $q^{\prime}$ and $q$ is not
adjacent to $p^{\prime}$ then $\{q^{\prime},q,p,p^{\prime},c\}$
induces a gem. Assume that $p$ is adjacent to
$q^{\prime}$ and that $q$ is adjacent to $p^{\prime}$.
Then all the black vertices and all the
white vertices are adjacent to $p^{\prime}$ and
to $q^{\prime}$. Assume that there is a black vertex $a \neq p$
and a white vertex $b \neq q$. Then $\{a,b,p,q,p^{\prime},c\}$
induces a 4-wheel with a pendant.
Assume that there is only one black vertex $p$.
Assume that there is a vertex $\delta \in C-C^{\ast}$ which is
not adjacent to $p$. Then $\delta$ is adjacent to a white
vertex $b$ and $\{p^{\prime},c,\delta,b,p\}$ induces a gem.
Assume that there is an isolated vertex $x$ in $G-C$
adjacent to $c$. Then $\{(p,q),(c,x)\}$ is a
$2K_2$.
Thus $N(c) \subset N(p)$.
Replace $c$ by $p$. Then we obtain a dominating clique $C^{\prime}$
with the same
cardinality as $C$ but there are more edges with one endpoint
in $C^{\prime}$ and the other endpoint in $V-C^{\prime}$ since
$p$ is furthermore adjacent to $q$.
This contradicts the assumption that $C$
maximizes the number of
edges with one endpoint in $C$ and the other in $V-C$.
Assume $p^{\prime}=q^{\prime}$. Then $G[B+W]$ is complete bipartite.
If there are black and white vertices $a \neq p$ and $b \neq q$
then $\{a,b,p,q,p^{\prime},c\}$ induces a 4-wheel with a pendant.
Assume that there is only one black vertex $p$. If there is a vertex
$\delta \in C-C^{\ast}$ that is not adjacent to $p$ then $\delta$
is adjacent to a white vertex $b$. Then $\{p^{\prime},c,\delta,b,p\}$
induces a gem. There is no isolated vertex in $G-C$ adjacent to $c$
otherwise there is a $2K_2$. As above we can replace $c$ by $p$
and obtain a dominating clique $C^{\prime}$
of the same cardinality as $C$ but with
more edges than $C$
with one endpoint in $C^{\prime}$ and the other endpoint
in $V-C^{\prime}$.

This proves the claim that every vertex of $C$ has a neighbor
in $B+W$.

Define an ordering on the vertices of $C+B+W$ as follows.
Let $x \preceq y$ if
\begin{enumerate}[\rm 1.]
\item $x \in B$ and $y \in B$ and
$N(y) \subseteq N(x)$,
\item $x \in B$ and $y \in C$ and
$y \in N(x)$,
\item $x \in B$ and $y \in W$ and
$y \in N(x)$,
\item $x \in C$ and $y \in C$ and
$N(x) \cap B \subseteq N(y) \cap B$ and
$N(y) \cap W \subseteq N(x) \cap W$,
\item $x \in C$ and $y \in W$ and
$y \in N(x)$,
\item $x \in W$ and $y \in W$ and
$N(x) \subseteq N(y)$,
\end{enumerate}
and the inverse conditions for the remaining pairs.
For every pair $x$ and $y$ in $C+B+W$ either $x \preceq y$ or $y \preceq x$.
We proved that the relation is transitive.
It is not necessarily antisymmetric.
Define an equivalence relation on
the vertices as follows. Two vertices
of $C+B+W$ are equivalent if they are in the same set of the
partition and, if they have the same open or closed neighborhood. Then the
relation $\preceq$ induces a linear order on the equivalence classes.
Notice that we could define the reversed order likewise.

Let $I$ be the subset of vertices in $C$ that have no black neighbors.
Let $J$ be the subset of vertices in $C$ that have no white neighbors.
The vertices of $I$ appear before all the black vertices in the linear order
and the vertices of $J$ appear after all the white vertices in the
linear order. Thus $I \cap J =\es$ since $G[B+W]$
is connected. Also $I \neq \es$ and $J \neq \es$ since $C$ is a
maximal clique.

Let $x$ be an isolated vertex in $G-C$. We prove that
$N(x)$ is contained in $I$ or it is contained in $J$.
Let $p$ be a black vertex with a minimal neighborhood and let
$q$ be a white vertex with a minimal neighborhood. Assume that $p$
and $q$ are not adjacent. Let $p^{\prime} \in C$ be a neighbor
of $p$ and let $q^{\prime} \in C$ be a neighbor of $q$.
Then $p^{\prime} \neq q^{\prime}$ and $p$
is not adjacent to $q^{\prime}$ and
$q$ is not adjacent to $p^{\prime}$. Assume that there is an isolated
vertex $x$ in $G-C$ adjacent to $p^{\prime}$ and to $q^{\prime}$.
Let $a$ be a black vertex with a maximal neighborhood and let
$b$ be a white vertex with a maximal neighborhood. Then $a$ and $b$
are adjacent. The vertex $a$ is adjacent to $p^{\prime}$ and
the vertex $b$ is adjacent to $q^{\prime}$. If $a$ is not adjacent to
$q^{\prime}$ and $b$ is not adjacent to $p^{\prime}$ then there is a
house $\{a,b,q^{\prime},p^{\prime},x\}$.
If $a$ is adjacent to $q^{\prime}$
then $a$ is adjacent to $q$ and there is a gem
$\{q^{\prime},q,a,p^{\prime},x\}$.
If there is only one black vertex $p$ then $p$ is adjacent to $q$
since $G[B+W]$ is connected. This contradicts the assumption that
$p$ and $q$ are not adjacent.
Assume that $p$ and $q$ are adjacent.
Then $G[B+W]$ is complete bipartite.
Assume that
$p^{\prime} \neq q^{\prime}$.
If $p$ is not adjacent to
$q^{\prime}$ and $q$ is not adjacent to $p^{\prime}$ then
$\{p,q,p^{\prime},q^{\prime},x\}$ induces a house. If $p$ is
adjacent to $q^{\prime}$ but $q$ is not adjacent to $p^{\prime}$
then $\{q^{\prime},q,p,p^{\prime},x\}$ induces a gem.
Assume that $p$ is adjacent to $q^{\prime}$ and that $q$ is
adjacent to $p^{\prime}$.
There is a vertex $\delta_1 \in C$ that is
adjacent to $p$ but not to $q$ and there is a vertex
$\delta_2 \in C$ that is adjacent to $q$ but not to $p$.
If $\delta_1$ is adjacent to $x$ then there is a gem
$\{p^{\prime},x,\delta_1,p,q\}$. If $\delta_2$ is adjacent to $x$
then there is a gem $\{q^{\prime},x,\delta_2,q,p\}$.
If $\delta_1$ is not adjacent to $x$ and $\delta_2$ is not adjacent to
$x$ then there is a 4-wheel with a pendant attached to its center
$\{p,\delta_1,\delta_2,q,p^{\prime},x\}$.
The same contradiction follows when $p^{\prime}=q^{\prime}$.

Assume that $x$ is not adjacent to any neighbor of $p$ and that
$x$ is not adjacent to any neighbor of $q$.
Assume that $p$ and $q$ are not adjacent. Let $p^{\prime} \in C$
be a neighbor of $p$ and let $q^{\prime} \in C$ be
a neighbor of $q$. Let $\delta \in C$ be a neighbor of $x$.
Then $\{p,q,x,p^{\prime},q^{\prime},\delta\}$ is a net.
Assume that $p$ and $q$ are adjacent.
Let $\delta_1 \in C$ be a neighbor of $p$ that is not a
neighbor of $q$ and let $\delta_2 \in C$ be a neighbor of
$q$ that is not a neighbor of $p$. Let $\delta \in C$
be a neighbor of $x$. Then $\{\delta,\delta_1,\delta_2,q,p\}$
induces a house.
This proves that $x$ has common neighbors in $C$ with exactly one
of $p$ and $q$. Assume that $x$ has no common neighbors with $q$.
We proceed by induction on the number of white vertices. Remove
$q$. By induction it follows that $x$ has no common neighbors
with any white vertex.

This proves the claim that $N(x)$ is contained in $I$ or in $J$
for any isolated vertex $x$ of $G-C$.

We prove that the neighborhoods of
isolated vertices of $G-C$ that have
neighbors in $I$ are ordered by set inclusion.
Consider three isolated vertices $x$, $y$ and $z$ of $G-C$
that have neighbors in $I$.
Assume that $x$ and $y$ have private neighbors $x^{\prime}$
and $y^{\prime}$ in $C$. Assume that $z$ is adjacent to $x^{\prime}$
and to $y^{\prime}$. There exists a vertex $\delta \in C$
that is not adjacent to $z$. If $x$ is adjacent to $\delta$ there
is a gem $\{x^{\prime},x,\delta,y^{\prime},z\}$. If
$y$ is adjacent to $\delta$ there is a
gem $\{y^{\prime},y,\delta,x^{\prime},z\}$. If $\delta$ is not adjacent to
$x$ and not to $y$ there is a diamond with two pendant vertices
$\{z,x^{\prime},y^{\prime},\delta,x,y\}$.
Assume that $z$ is not adjacent to $x^{\prime}$ and that
$z$ is not adjacent to $y^{\prime}$. Assume that $z$ has a neighbor
$z^{\prime}$ that is not adjacent to $x$ and that is not adjacent to
$y$. Then $\{x^{\prime},y^{\prime},z^{\prime},x,y,z\}$ is a net.
If $z$ has a neighbor $z_1$ that is adjacent to $x$ but not to
$y$ and a neighbor $z_2$ that is adjacent to $y$ but not to $x$
then there is a gem $\{z_1,x,x^{\prime},z_2,z\}$.
Assume that $z$ is adjacent to $x^{\prime}$ but not adjacent to
$y^{\prime}$. Assume that $x$ has a neighbor $x^{\prime\prime}$
that is not adjacent to $z$ and that $z$ has a neighbor $z^{\prime}$
that is not adjacent to $x$. Then
$\{x^{\prime},z,z^{\prime},x^{\prime\prime},x\}$ is a gem.
Thus the neighborhood
of $z$ is comparable with the neighborhood of exactly one of
$x$ and $y$ and it is disjoint with the neighborhood of the
other one.

This proves the claim that the neighborhoods of the isolated vertices
with neighbors in $I$ are ordered by set inclusion.
A similar proof shows that the neighborhoods of the white vertices
and of the isolated vertices with their neighborhoods in $I$ and,
the neighborhoods of the black vertices and of the isolated
vertices with their neighborhoods in $J$ are ordered by set inclusion.

Color the vertices of $C$ white and equip them with a $\otimes_w$-operator.
If there is a component in $G-C$ with more than one vertex then
color the vertices of $B$ black and equip them with a $\otimes_w$-operator.
Color the vertices of $W$ white and equip them with a $\oplus$-operator.
Color the isolated vertices of $G-C$ that have their neighbors in
$I$ white and equip them with a $\oplus$-operator. Color the isolated
vertices of $G-C$ that have their neighbors in $J$ black and equip them
with a $\otimes_w$-operator. If $G-C$ has no component with more
than one vertex then let $I$ and $J$ be maximal disjoint neighborhoods
in $C$. We proved that there is a linear ordering of the vertices
that satisfies the neighborhood conditions.

This proves the theorem. \end{proof}

\begin{remark}
A class of graphs that is closely related to the special 2-threshold
graphs is the class of probe threshold graphs~\cite{kn:bayer,kn:chandler}.
The obstruction set for probe threshold graphs appears in~\cite{kn:bayer}
(without proof). That the classes are different is illustrated by
the complement of $2P_3$. This graph is not a probe threshold graph but it
is a special 2-threshold graph. The gem is an example of a probe threshold
graphs that is not a special 2-threshold graph.
\end{remark}

\begin{lemma}
\label{rw 1}
The special 2-threshold graphs have rankwidth one. They can be
recognized in linear time.
\end{lemma}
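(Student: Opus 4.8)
The plan is to prove the two assertions in turn. For the rankwidth bound, start from a special $2$-threshold graph $G$ together with a decomposition tree $(T,f)$ using only $\oplus$- and $\otimes_w$-operators. Since at every internal node the right subtree is a single leaf, $T$ is a caterpillar: there is an ordering $[v_1,\ldots,v_n]$ of the vertices such that, for each $i$, $v_i$ is either isolated in $G[\{v_1,\ldots,v_i\}]$ (if the operator at its node is $\oplus$) or adjacent in $G[\{v_1,\ldots,v_i\}]$ to exactly the white vertices among $v_1,\ldots,v_{i-1}$ (if it is $\otimes_w$). After suppressing its degree-two root, $T$ is a subcubic tree whose leaf set is $V(G)$, i.e.\ a rank-decomposition of $G$, and the only cuts it produces are the singleton cuts $\{v_i\}$ versus the rest, and the prefix/suffix cuts $\{v_1,\ldots,v_j\}$ versus $\{v_{j+1},\ldots,v_n\}$. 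A singleton cut has a single $0/1$ row as cutmatrix, so rank at most one. For a prefix/suffix cut observe that, because adding a vertex only creates edges incident to that vertex, for $i\le j<\ell$ the pair $(v_i,v_\ell)$ is an edge of $G$ exactly when $v_\ell$ was added by a $\otimes_w$-operator and $v_i$ is white; hence every column of this cutmatrix is either the zero vector or the incidence vector of the white vertices of $\{v_1,\ldots,v_j\}$, so it has rank at most one over $GF[2]$. Therefore every cutmatrix has rank at most one, and $G$ has rankwidth at most one (and exactly one as soon as $G$ has an edge).

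For the linear-time recognition I would combine this with Oum's characterization of the graphs of rankwidth at most one as precisely the distance-hereditary graphs, and with Theorem~\ref{char special}. The algorithm is: first test in linear time whether $G$ is distance-hereditary; if it is not, then by the rankwidth bound just proved $G$ is not a special $2$-threshold graph and we answer ``no''. If $G$ is distance-hereditary, compute in linear time a cliquewidth expression of bounded width (for instance via the split decomposition, which is computable in linear time, and which yields a $3$-expression). By Theorem~\ref{char special} the class of special $2$-threshold graphs is exactly the class of graphs containing none of the fixed finite list $2K_2$, $C_5$, net, house, gem, octahedron, the $4$-wheel with a pendant at its center, and the diamond with a pendant attached to each vertex of degree three as an induced subgraph; ``$G$ contains none of these'' is a fixed first-order (hence $C_2MS$) sentence, and such a sentence can be evaluated in linear time on a graph supplied with a bounded-width cliquewidth expression. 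So the last step is linear, and correctness is immediate from Theorem~\ref{char special} together with the inclusion of special $2$-threshold graphs in the distance-hereditary graphs.

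The routine points left to fill in are the verification that $(T,f)$ is indeed a caterpillar and that suppressing its root gives a genuine subcubic rank-decomposition, the bookkeeping that the cuts are exactly the singleton and prefix/suffix cuts, and pointers to the linear-time recognition of distance-hereditary graphs and to the linear-time evaluation of a fixed first-order sentence on bounded cliquewidth. The one real obstacle is the word ``linear'' rather than merely ``polynomial'': unlike the general bounded-rankwidth situation, where the excerpt only offers $O(n^3)$ because constructing the decomposition is the bottleneck, here the whole point is that for rankwidth one a width-bounded decomposition is available in linear time, so the model-checking step does not dominate; this is the step I would be most careful to justify. An alternative that sidesteps the cliquewidth machinery would be to make the constructive half of the proof of Theorem~\ref{char special} into an algorithm — find a maximum dominating clique, strip off the difference graph on $G-C$ and the isolated vertices, and read off the linear order — but achieving linear time on that route looks at least as delicate, so I would present the route through distance-hereditary graphs.
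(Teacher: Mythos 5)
Your proof is correct, but the first half takes a genuinely different route from the paper's. For the rankwidth bound the paper does not look at cutmatrices at all: it observes that special 2-threshold graphs are distance hereditary, by checking that every induced subgraph has an isolated vertex, a pendant vertex, or a twin pair, and then invokes Oum's theorem that the distance-hereditary graphs are exactly the graphs of rankwidth one. You instead compute the rank directly: the decomposition tree with only $\oplus$- and $\otimes_w$-operators is a caterpillar, its cuts are singletons and prefix/suffix cuts, and in a prefix cut every nonzero column of the cutmatrix equals the indicator vector of the white vertices on the prefix side, so the rank over $GF[2]$ is at most one. This is sound and arguably more self-contained than the paper's ``it is easy to check'' appeal to the isolated/pendant/twin characterization, and it also explains concretely why restricting to $\otimes_w$ drops the general bound of $k$ for $k$-threshold graphs down to one. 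For the recognition half the two arguments essentially coincide: both first obtain a bounded-width decomposition in linear time (the paper cites a linear-time rank-decomposition for distance-hereditary graphs via Hammer--Maffray, you go through the split decomposition and a cliquewidth $3$-expression) and then evaluate, in linear time on that decomposition, the fixed sentence asserting that none of the finitely many obstructions of Theorem~\ref{char special} occurs as an induced subgraph; both versions depend on Theorem~\ref{char special} and on the equivalence of rankwidth one with distance heredity to justify rejecting non-distance-hereditary inputs outright. Your explicit caution about where ``linear'' (as opposed to cubic) comes from -- the decomposition is free for rankwidth one, so model checking dominates and is linear -- is exactly the point the paper leaves implicit.
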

\begin{proof}
A graph $G$ is distance hereditary if in every induced subgraph
$G[W]$ for every pair of vertices in a component of $G[W]$ the
distance between them is the same as in $G$~\cite{kn:howorka}. The
distance-hereditary graphs can be characterized as follows. Every
induced subgraph has an isolated vertex, or a pendant vertex, or two
vertices $x$ and $y$ such that every other vertex $z$ is adjacent to
both $x$ and $y$ or to neither of them. A pair of vertices like that
is called a twin. It is easy to check that the special graphs
satisfy this property. The distance-hereditary graphs are exactly
the graphs of rankwidth one~\cite{kn:oum4}. A rank-decomposition
tree can be found in linear time~\cite{kn:hammer}. We can formulate
the existence of an induced subgraph that is isomorphic to one of
the graphs in the obstruction set in monadic second-order logic
without quantification over edge-sets. This proves the lemma.
\end{proof}

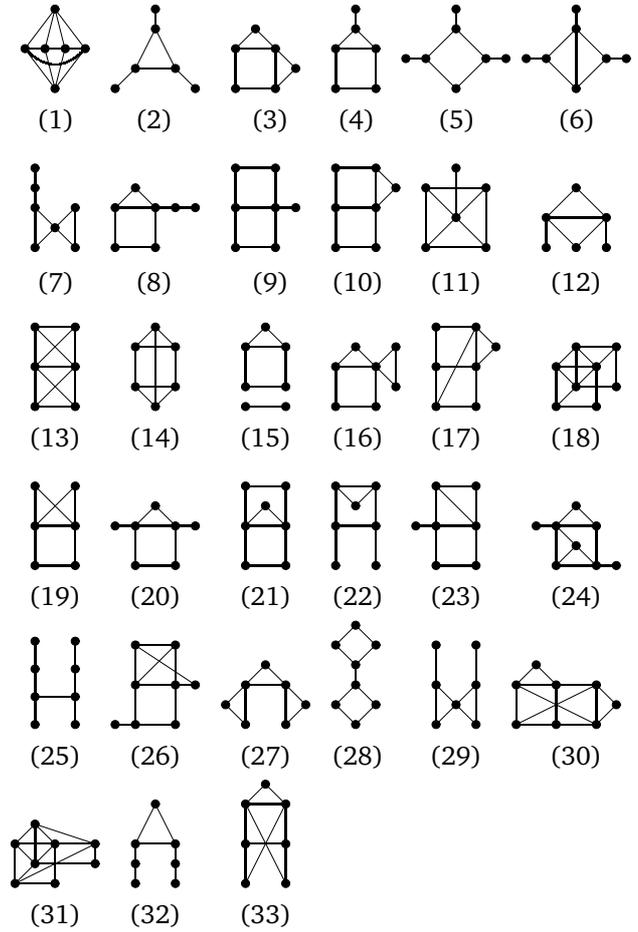
\begin{figure}[t]
\setlength{\unitlength}{1.5pt}
\begin{center}
\begin{picture}(152.5,235)(0,-160)
%row 1
%1
\put(2.5,60){\circle*{2}} \put(7.5,60){\circle*{2}}
\put(12.5,60){\circle*{2}} \put(17.5,60){\circle*{2}}
\put(10,50){\circle*{2}} \put(10,70){\circle*{2}}
\put(10,50){\line(-3,4){7.5}} \put(10,50){\line(-1,4){2.5}}
\put(10,50){\line(3,4){7.5}} \put(10,50){\line(1,4){2.5}}
\put(10,70){\line(-3,-4){7.5}} \put(10,70){\line(-1,-4){2.5}}
\put(10,70){\line(3,-4){7.5}} \put(10,70){\line(1,-4){2.5}}
\put(2.5,60){\line(1,0){15}} \qbezier(2.5,60)(10,52)(17.5,60)
\put(5.5,40){(1)}

%2
\put(25,50){\circle*{2}} \put(30,55){\circle*{2}}
\put(35,65){\circle*{2}} \put(35,70){\circle*{2}}
\put(40,55){\circle*{2}} \put(45,50){\circle*{2}}
\put(30,55){\line(-1,-1){5}} \put(30,55){\line(1,0){10}}
\put(30,55){\line(1,2){5}} \put(35,65){\line(0,1){5}}
\put(40,55){\line(1,-1){5}} \put(40,55){\line(-1,2){5}}
\put(30,40){(2)}

%3
\put(55,50){\circle*{2}} \put(55,60){\circle*{2}}
\put(60,65){\circle*{2}} \put(65,50){\circle*{2}}
\put(65,60){\circle*{2}} \put(70,55){\circle*{2}}
\put(55,50){\line(1,0){10}} \put(55,50){\line(0,1){10}}
\put(65,60){\line(-1,0){10}} \put(65,60){\line(0,-1){10}}
\put(60,65){\line(-1,-1){5}} \put(60,65){\line(1,-1){5}}
\put(70,55){\line(-1,1){5}} \put(70,55){\line(-1,-1){5}}
\put(59,40){(3)}

%4

\put(80,50){\circle*{2}} \put(80,60){\circle*{2}}
\put(85,65){\circle*{2}} \put(90,50){\circle*{2}}
\put(85,70){\circle*{2}} \put(90,60){\circle*{2}}
\put(80,50){\line(1,0){10}} \put(80,50){\line(0,1){10}}
\put(90,60){\line(-1,0){10}} \put(90,60){\line(0,-1){10}}
\put(85,65){\line(-1,-1){5}} \put(85,65){\line(1,-1){5}}
\put(85,70){\line(0,-1){5}} \put(80.5,40){(4)}

%5

\put(110,50){\circle*{2}} \put(102.5,57.5){\circle*{2}}
\put(110,65){\circle*{2}} \put(117.5,57.5){\circle*{2}}
\put(97.5,57.5){\circle*{2}} \put(122.5,57.5){\circle*{2}}
\put(110,70){\circle*{2}} \put(110,50){\line(1,1){7.5}}
\put(110,50){\line(-1,1){7.5}} \put(110,65){\line(-1,-1){7.5}}
\put(110,65){\line(1,-1){7.5}} \put(97.5,57.5){\line(1,0){5}}
\put(122.5,57.5){\line(-1,0){5}} \put(110,70){\line(0,-1){5}}
\put(105.5,40){(5)}

%6

\put(140,50){\circle*{2}} \put(132.5,57.5){\circle*{2}}
\put(140,65){\circle*{2}} \put(147.5,57.5){\circle*{2}}
\put(127.5,57.5){\circle*{2}} \put(152.5,57.5){\circle*{2}}
\put(140,70){\circle*{2}} \put(140,50){\line(1,1){7.5}}
\put(140,50){\line(-1,1){7.5}} \put(140,65){\line(-1,-1){7.5}}
\put(140,65){\line(1,-1){7.5}} \put(127.5,57.5){\line(1,0){5}}
\put(152.5,57.5){\line(-1,0){5}} \put(140,70){\line(0,-1){5}}
\put(140,50){\line(0,1){15}}

\put(135.5,40){(6)}

%row2

%7

\multiput(5,10)(0,10){3}{\circle*{2}} \put(5,25){\circle*{2}}
\multiput(15,10)(0,10){2}{\circle*{2}}
\multiput(5,10)(10,0){2}{\line(0,1){10}} \put(5,30){\line(0,-1){10}}
\put(10,15){\circle*{2}} \put(5,10){\line(1,1){10}}
\put(5,20){\line(1,-1){10}} \put(5.5,-1){(7)}

%8

\put(25,10){\circle*{2}} \put(35,10){\circle*{2}}
\put(25,20){\circle*{2}} \put(35,20){\circle*{2}}
\put(30,25){\circle*{2}} \put(40,20){\circle*{2}}
\put(45,20){\circle*{2}} \put(25,10){\line(1,0){10}}
\put(25,10){\line(0,1){10}} \put(35,20){\line(-1,0){10}}
\put(35,20){\line(0,-1){10}} \put(30,25){\line(-1,-1){5}}
\put(30,25){\line(1,-1){5}} \put(35,20){\line(1,0){10}}
\put(30,-1){(8)}

%9
\multiput(55,10)(0,10){3}{\circle*{2}}
\multiput(65,10)(0,10){3}{\circle*{2}} \put(70,20){\circle*{2}}
\multiput(55,10)(0,10){3}{\line(1,0){10}}
\multiput(55,10)(10,0){2}{\line(0,1){20}} \put(65,20){\line(1,0){5}}
\put(59,-1){(9)}

%10
\multiput(80,10)(0,10){3}{\circle*{2}}
\multiput(90,10)(0,10){3}{\circle*{2}} \put(95,25){\circle*{2}}
\multiput(80,10)(0,10){3}{\line(1,0){10}}
\multiput(80,10)(10,0){2}{\line(0,1){20}}
\put(95,25){\line(-1,1){5}} \put(95,25){\line(-1,-1){5}}
\put(79,-1){(10)}

%11 4-wheel with a pendant adjacent to the center
\put(102.5,10){\circle*{2}} \put(102.5,25){\circle*{2}}
\put(117.5,10){\circle*{2}} \put(117.5,25){\circle*{2}}
\put(110,17.5){\circle*{2}} \put(110,30){\circle*{2}}
\put(110,30){\line(0,-1){12.5}} \put(110,17.5){\line(1,1){7.5}}
\put(110,17.5){\line(-1,-1){7.5}} \put(110,17.5){\line(-1,1){7.5}}
\put(110,17.5){\line(1,-1){7.5}} \put(102.5,10){\line(1,0){15}}
\put(102.5,10){\line(0,1){15}} \put(117.5,25){\line(-1,0){15}}
\put(117.5,25){\line(0,-1){15}}

\put(103.5,-1){(11)}

%12
%Diamond  with a pendant adjacent to each vertex of degree three+K_2
\put(132.5,10){\circle*{2}} \put(132.5,17.5){\circle*{2}}
\put(140,10){\circle*{2}} \put(140,25){\circle*{2}}
\put(147.5,10){\circle*{2}} \put(147.5,17.5){\circle*{2}}
\put(132.5,10){\line(0,1){7.5}} \put(147.5,10){\line(0,1){7.5}}
\put(132.5,17.5){\line(1,0){15}} \put(132.5,17.5){\line(1,1){7.5}}
\put(132.5,17.5){\line(1,-1){7.5}}
\put(147.5,17.5){\line(-1,1){7.5}}
\put(147.5,17.5){\line(-1,-1){7.5}}

\put(133.5,-1){(12)}

%row 3

% 13

\multiput(5,-30)(0,10){3}{\circle*{2}}
\multiput(15,-30)(0,10){3}{\circle*{2}}
\multiput(5,-30)(0,10){3}{\line(1,0){10}}
\multiput(5,-30)(10,0){2}{\line(0,1){20}}
\multiput(5,-30)(0,10){2}{\line(1,1){10}}
\multiput(5,-20)(0,10){2}{\line(1,-1){10}} \put(3.5,-40){(13)}

%14

\put(30,-25){\circle*{2}} \put(40,-25){\circle*{2}}
\put(30,-15){\circle*{2}} \put(40,-15){\circle*{2}}
\put(35,-10){\circle*{2}} \put(35,-30){\circle*{2}}
\put(30,-25){\line(1,0){10}} \put(30,-25){\line(0,1){10}}
\put(30,-15){\line(1,0){10}} \put(40,-25){\line(0,1){10}}
\put(35,-10){\line(-1,-1){5}} \put(35,-10){\line(1,-1){5}}
\put(35,-30){\line(1,1){5}} \put(35,-30){\line(-1,1){5}}
\put(35,-30){\line(0,1){20}}

\put(28.5,-40){(14)}

%15

\put(57.5,-25){\circle*{2}} \put(57.5,-15){\circle*{2}}
\put(62.5,-10){\circle*{2}} \put(67.5,-25){\circle*{2}}
\put(67.5,-15){\circle*{2}} \put(57.5,-25){\line(1,0){10}}
\put(57.5,-25){\line(0,1){10}} \put(67.5,-15){\line(-1,0){10}}
\put(67.5,-15){\line(0,-1){10}} \put(62.5,-10){\line(-1,-1){5}}
\put(62.5,-10){\line(1,-1){5}}

\put(57.5,-30){\circle*{2}} \put(67.5,-30){\circle*{2}}
\put(57.5,-30){\line(1,0){10}} \put(56,-40){(15)}

%16
\put(80,-30){\circle*{2}} \put(90,-30){\circle*{2}}
\put(80,-20){\circle*{2}} \put(90,-20){\circle*{2}}
\put(85,-15){\circle*{2}} \put(95,-25){\circle*{2}}
\put(95,-15){\circle*{2}} \put(80,-30){\line(1,0){10}}
\put(80,-30){\line(0,1){10}} \put(90,-20){\line(-1,0){10}}
\put(90,-20){\line(0,-1){10}} \put(85,-15){\line(-1,-1){5}}
\put(85,-15){\line(1,-1){5}} \put(90,-20){\line(1,1){5}}
\put(90,-20){\line(1,-1){5}} \put(95,-25){\line(0,1){10}}
\put(79,-40){(16)}

%17

\multiput(105,-30)(0,10){3}{\circle*{2}}
\multiput(115,-30)(0,10){3}{\circle*{2}} \put(120,-15){\circle*{2}}
\multiput(105,-30)(0,10){3}{\line(1,0){10}}
\multiput(105,-30)(10,0){2}{\line(0,1){20}}
\put(115,-10){\line(-1,-2){10}} \put(120,-15){\line(-1,1){5}}
\put(120,-15){\line(-1,-1){5}} \put(103.5,-40){(17)}

%18

\put(135,-30){\circle*{2}} \put(145,-30){\circle*{2}}
\put(135,-20){\circle*{2}} \put(145,-20){\circle*{2}}
\put(140,-15){\circle*{2}} \put(140,-25){\circle*{2}}
\put(150,-15){\circle*{2}} \put(135,-30){\line(1,0){10}}
\put(135,-30){\line(0,1){10}} \put(145,-20){\line(-1,0){10}}
\put(145,-20){\line(0,-1){10}} \put(140,-15){\line(-1,-1){5}}
\put(140,-15){\line(1,-1){5}} \put(140,-25){\line(-1,1){5}}
\put(140,-25){\line(1,1){5}} \put(140,-25){\line(-1,-1){5}}
\put(140,-25){\line(0,1){10}} \put(150,-15){\line(-1,0){10}}
\put(150,-15){\line(-1,-1){5}} \put(150,-25){\circle*{2}}
\put(150,-25){\line(0,1){10}} \put(150,-25){\line(-1,0){10}}

\put(133.5,-40){(18)}

%row 4

%19

\multiput(5,-70)(0,10){3}{\circle*{2}}
\multiput(15,-70)(0,10){3}{\circle*{2}}
\multiput(5,-70)(10,0){2}{\line(0,1){20}}
\multiput(5,-70)(0,10){2}{\line(1,0){10}}
\put(5,-50){\line(1,-1){10}} \put(15,-50){\line(-1,-1){10}}
\put(3.5,-80){(19)}

%20
\put(30,-70){\circle*{2}} \put(40,-70){\circle*{2}}
\put(30,-60){\circle*{2}} \put(40,-60){\circle*{2}}
\put(25,-60){\circle*{2}} \put(45,-60){\circle*{2}}
\put(35,-55){\circle*{2}} \put(30,-70){\line(1,0){10}}
\put(30,-70){\line(0,1){10}} \put(40,-60){\line(-1,0){10}}
\put(40,-60){\line(0,-1){10}} \put(35,-55){\line(-1,-1){5}}
\put(35,-55){\line(1,-1){5}} \put(25,-60){\line(1,0){5}}
\put(45,-60){\line(-1,0){5}} \put(28.5,-80){(20)}

%21

\multiput(57.5,-70)(0,10){3}{\circle*{2}}
\multiput(67.5,-70)(0,10){3}{\circle*{2}}
\multiput(57.5,-70)(0,10){3}{\line(1,0){10}}
\multiput(57.5,-70)(10,0){2}{\line(0,1){20}}
\put(62.5,-55){\circle*{2}} \put(62.5,-55){\line(-1,-1){5}}
\put(62.5,-55){\line(1,-1){5}} \put(56,-80){(21)}

%22

\multiput(80,-70)(0,10){3}{\circle*{2}}
\multiput(90,-70)(0,10){3}{\circle*{2}}
\multiput(80,-70)(10,0){2}{\line(0,1){20}}
\multiput(80,-60)(0,10){2}{\line(1,0){10}} \put(85,-55){\circle*{2}}
\put(85,-55){\line(1,1){5}} \put(85,-55){\line(-1,1){5}}
\put(79,-80){(22)}

%23

\multiput(105,-70)(0,10){3}{\circle*{2}}
\multiput(115,-70)(0,10){3}{\circle*{2}} \put(100,-60){\circle*{2}}
\multiput(105,-70)(0,10){3}{\line(1,0){10}}
\multiput(105,-70)(10,0){2}{\line(0,1){20}}
\put(100,-60){\line(1,0){5}} \put(105,-50){\line(1,-1){10}}
\put(103.5,-80){(23)}

%24
\put(135,-70){\circle*{2}} \put(135,-60){\circle*{2}}
\put(145,-70){\circle*{2}} \put(145,-60){\circle*{2}}
\put(140,-65){\circle*{2}} \put(140,-55){\circle*{2}}
\put(130,-60){\circle*{2}} \put(150,-70){\circle*{2}}
\put(135,-70){\line(1,0){10}} \put(135,-70){\line(0,1){10}}
\put(145,-60){\line(-1,0){10}} \put(145,-60){\line(0,-1){10}}
\put(130,-60){\line(1,0){5}} \put(150,-70){\line(-1,0){5}}
\put(140,-55){\line(-1,-1){5}} \put(140,-55){\line(1,-1){5}}
\put(140,-65){\line(-1,-1){5}} \put(140,-65){\line(1,-1){5}}
\put(140,-65){\line(-1,1){5}} \put(133.5,-80){(24)}

%row 5

%25

\multiput(5,-110)(0,7){4}{\circle*{2}}
\multiput(15,-110)(0,7){4}{\circle*{2}} \put(5,-110){\line(0,1){21}}
\put(15,-110){\line(0,1){21}} \put(5,-103){\line(1,0){10}}

\put(3.5,-120){(25)}

%26

\multiput(30,-110)(0,10){3}{\circle*{2}}
\multiput(40,-110)(0,10){3}{\circle*{2}}
\multiput(30,-110)(0,10){3}{\line(1,0){10}}
\multiput(30,-110)(10,0){2}{\line(0,1){20}}
\put(40,-90){\line(-1,-1){10}} \put(25,-110){\circle*{2}}
\put(25,-110){\line(1,0){5}} \put(45,-100){\circle*{2}}
\put(45,-100){\line(-1,0){5}} \put(45,-100){\line(-3,2){15}}
\put(28.5,-120){(26)}

%27

\multiput(57.5,-110)(10,0){2}{\circle*{2}}
\multiput(52.5,-105)(20,0){2}{\circle*{2}}
\multiput(57.5,-100)(10,0){2}{\circle*{2}}
\put(62.5,-95){\circle*{2}} \put(62.5,-95){\line(-1,-1){5}}
\put(62.5,-95){\line(1,-1){5}} \put(57.5,-100){\line(0,-1){10}}
\put(57.5,-100){\line(1,0){10}} \put(67.5,-100){\line(0,-1){10}}
\put(52.5,-105){\line(1,1){5}} \put(52.5,-105){\line(1,-1){5}}
\put(72.5,-105){\line(-1,1){5}} \put(72.5,-105){\line(-1,-1){5}}
\put(56,-120){(27)}

%28

\put(85,-110){\circle*{2}} \put(80,-105){\circle*{2}}
\put(90,-105){\circle*{2}} \put(85,-100){\circle*{2}}
\put(85,-95){\circle*{2}} \put(80,-90){\circle*{2}}
\put(90,-90){\circle*{2}} \put(85,-85){\circle*{2}}
\put(85,-85){\line(-1,-1){5}} \put(85,-85){\line(1,-1){5}}
\put(85,-95){\line(1,1){5}} \put(85,-95){\line(-1,1){5}}
\put(85,-95){\line(0,-1){5}} \put(85,-100){\line(-1,-1){5}}
\put(85,-100){\line(1,-1){5}} \put(85,-110){\line(-1,1){5}}
\put(85,-110){\line(1,1){5}}

\put(79,-120){(28)}

%29

\multiput(105,-110)(0,10){3}{\circle*{2}}
\multiput(115,-110)(0,10){3}{\circle*{2}}
\multiput(105,-110)(10,0){2}{\line(0,1){20}}
\put(110,-105){\circle*{2}} \put(105,-110){\line(1,1){10}}
\put(105,-100){\line(1,-1){10}} \put(103.5,-120){(29)}

%30
\multiput(125,-110)(10,0){3}{\circle*{2}}
\multiput(125,-100)(10,0){3}{\circle*{2}} \put(130,-95){\circle*{2}}
\put(150,-105){\circle*{2}}
\multiput(125,-110)(10,0){3}{\line(0,1){10}}
\multiput(125,-110)(0,10){2}{\line(1,0){20}}
\put(130,-95){\line(-1,-1){5}} \put(130,-95){\line(1,-1){5}}
\put(150,-105){\line(-1,1){5}} \put(150,-105){\line(-1,-1){5}}
\put(125,-110){\line(2,1){20}} \put(125,-100){\line(2,-1){20}}
\put(133.5,-120){(30)}

%row 6

%31

\put(0,-150){\circle*{2}} \put(10,-150){\circle*{2}}
\put(0,-140){\circle*{2}} \put(10,-140){\circle*{2}}
\put(5,-135){\circle*{2}} \put(5,-145){\circle*{2}}
\put(20,-140){\circle*{2}} \put(20,-145){\circle*{2}}
\put(0,-150){\line(1,0){10}} \put(0,-150){\line(0,1){10}}
\put(10,-140){\line(-1,0){10}} \put(10,-140){\line(0,-1){10}}
\put(5,-135){\line(-1,-1){5}} \put(5,-135){\line(1,-1){5}}
\put(5,-145){\line(-1,-1){5}} \put(5,-145){\line(1,1){5}}
\put(5,-145){\line(-1,1){5}} \put(5,-145){\line(0,1){10}}
\put(20,-140){\line(-1,0){10}} \put(20,-140){\line(-3,1){15}}
\put(20,-145){\line(0,1){5}} \put(20,-145){\line(-1,0){15}}
\put(20,-140){\line(-2,-1){20}}

\put(3.5,-160){(31)}

%32

\multiput(30,-150)(0,5){3}{\circle*{2}}
\multiput(40,-150)(0,5){3}{\circle*{2}} \put(35,-130){\circle*{2}}
\put(30,-140){\line(1,2){5}} \put(40,-140){\line(-1,2){5}}
\put(30,-140){\line(1,0){10}} \put(30,-140){\line(0,-1){10}}
\put(40,-140){\line(0,-1){10}} \put(28.5,-160){(32)}

%33

\multiput(57.5,-150)(0,10){3}{\circle*{2}}
\multiput(67.5,-150)(0,10){3}{\circle*{2}}
\multiput(57.5,-150)(10,0){2}{\line(0,1){20}}
\multiput(57.5,-140)(0,10){2}{\line(1,0){10}}
\put(62.5,-125){\circle*{2}} \put(62.5,-125){\line(-1,-1){5}}
\put(62.5,-125){\line(1,-1){5}} \put(57.5,-150){\line(1,2){10}}
\put(67.5,-150){\line(-1,2){10}}

\put(56,-160){(33)}

\end{picture}
\end{center}
\vspace*{-0.4cm} \caption{The set of forbidden induced subgraphs of
2-threshold graphs contains the gem, $3K_2$, $C_5$, $C_6$, $C_7$,
$P_7$, $2C_4$, $2P_4$, $C_4+P_4$ and the graphs listed above.}
\label{fig:FIS_2_threshold_graphs}
\end{figure}

\section{Partitioned black-and-white threshold graphs}
%%%%%%%%%%%%%%%%%%%%%%%%%%%%%%%%%%%%%%%%%%%%%%%%%%%%%%%

In this section we turn our attention to the partitioned
2-threshold graphs, that is, we assume that the coloring is a
part of the input.

\begin{lemma}
If $G$ is 2-threshold then the following statements hold true.
\begin{enumerate}[\rm (a)]
\item $G$ has at most two components with at least two vertices.
\item If $G$ has two components with at least two
vertices, then one is a threshold graph and the other is a special
2-threshold graph.
\end{enumerate}
\end{lemma}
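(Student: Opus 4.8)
My plan is to read both parts off the known obstruction set $\mathcal O_2$ of $2$-threshold graphs (Figure~\ref{fig:FIS_2_threshold_graphs}) together with the forbidden-subgraph characterization of special $2$-threshold graphs (Theorem~\ref{char special}), using throughout that, in a graph with components $G_1$ and $G_2$, the disjoint union of an induced subgraph of $G_1$ and one of $G_2$ is again an induced subgraph. Part (a) is then immediate: $3K_2\in\mathcal O_2$, so $G$ has no three pairwise non-adjacent edges, whereas three components each with at least one edge would supply exactly such a $3K_2$.

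For part (b) let $D_1$ and $D_2$ be the two components of $G$ with at least two vertices. First I would show that at least one of them is a threshold graph. Since $G$ has no induced $3K_2$, neither $D_i$ has an induced $2K_2$. Since $2C_4$, $C_4+P_4$ and $2P_4$ all lie in $\mathcal O_2$, the two components cannot both contain an induced $C_4$ or an induced $P_4$: such a subgraph of $D_1$ together with one of $D_2$ would, as an induced subgraph of $G$, be one of $2C_4$, $C_4+P_4$, $2P_4$. Hence one of them, say $D_1$, has no induced $2K_2$, $C_4$, nor $P_4$, and is therefore a threshold graph.

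It remains to prove that $D_2$ is a special $2$-threshold graph; by Theorem~\ref{char special} it suffices that $D_2$ contain none of $2K_2$, $C_5$, the net, the house, the gem, the octahedron, the $4$-wheel with a pendant vertex adjacent to the center, and the diamond with a pendant vertex attached to each vertex of degree three. If $D_2$ contained an induced copy of one of these graphs $O$, then adjoining an edge of $D_1$ would exhibit $O+K_2$ as an induced subgraph of $G$; since $2$-threshold graphs are hereditary, it is enough to check that $O+K_2$ is not $2$-threshold for each of these eight choices of $O$. This is immediate for $O=2K_2$ (then $O+K_2=3K_2\in\mathcal O_2$) and for $O\in\{C_5,\text{gem}\}$ (already $O\in\mathcal O_2$), and for the remaining five graphs it is a routine verification---indeed the net, the octahedron, the $4$-wheel-with-pendant and the diamond-with-two-pendants are themselves members of $\mathcal O_2$ in Figure~\ref{fig:FIS_2_threshold_graphs}, so only the house requires genuine work, namely that $\text{house}+K_2$ contains a member of $\mathcal O_2$. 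The same reasoning with $D_1$ and $D_2$ exchanged shows that in fact both components are special; the stated dichotomy is obtained by letting the threshold component (which is trivially also a special $2$-threshold graph) play the first role.

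The only non-trivial point is thus the check that $\text{house}+K_2$ is not $2$-threshold: I would isolate this as a small separate claim and settle it either by pointing to the corresponding picture in Figure~\ref{fig:FIS_2_threshold_graphs} or by the finite case analysis over colorings. Everything else reduces to membership in obstruction sets that are already established, plus the observation that the pieces of an induced subgraph of a disconnected graph can be chosen independently in the components.
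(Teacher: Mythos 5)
Your proof is correct, but it takes a genuinely different route from the paper's. The paper argues directly from the coloring and the decomposition tree: it fixes an edge $(x,y)$ in one component $C_1$ and splits on its colors. If the edge is bichromatic, then $C_2$ cannot contain a bichromatic edge (a $2K_2$ with two bichromatic edges has no elimination ordering), so $C_2$ is monochromatic and hence threshold; if the edge is monochromatic, say black, the paper inspects the decomposition tree of $C_2+\{x,y\}$ and shows every $\otimes_b$-operator in the part belonging to $C_2$ can be replaced by $\oplus$, so $C_2$ is special. Your proof instead works entirely at the level of forbidden induced subgraphs: part (a) is identical, and for part (b) you combine obstructions across components ($3K_2$, $2C_4$, $2P_4$, $C_4+P_4$ to get one threshold component; $O+K_2$ for each obstruction $O$ of Theorem~\ref{char special} to get that the other component is special). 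This is sound, and it even yields the slightly stronger conclusion that both components are special. The trade-off is what each argument rests on: the paper needs only two tiny checks (the $3K_2$ and the doubly-bichromatic $2K_2$) plus a constructive tree manipulation, whereas your route leans on Theorem~\ref{char special} together with roughly eight membership claims for $\mathcal O_2$ ($2C_4$, $2P_4$, $C_4+P_4$, the net, the octahedron, the $4$-wheel with pendant, the diamond with two pendants, and house$+K_2$). These are all consistent with Figure~\ref{fig:FIS_2_threshold_graphs}, but that figure's caption is asserted rather than proven anywhere in the paper, so if you use this route you should really carry out (or at least explicitly cite as verified) those finite colouring checks --- you correctly flag the house$+K_2$ case, which is graph (15) of the figure, but the same caveat applies to the other entries you invoke. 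One stylistic gain of the paper's approach is that it is constructive: it produces the monochromatic colouring or the $\{\oplus,\otimes_w\}$-tree for the second component, which is what later arguments actually use.
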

\begin{proof}
Assume that $G$ has more than two components with at least
two vertices. Then $G$ has an induced $3K_2$. It is easy to check
that $3K_2$ is not a 2-threshold graph.

Assume that there are two components $C_1$ and $C_2$
with at least two vertices.
Let $x$ and $y$ be adjacent vertices in $C_1$.
First assume that
$x$ is black and $y$ is white.
Assume that $C_2$ has a black vertex and a white vertex.
Then it has an edge with one black endpoint and one
white endpoint. We find a $2K_2$ and both edges have one
black endpoint and one white endpoint. It is easy to check
that there is no elimination ordering for a $2K_2$ that is
colored like that.
Thus $C_2$ is monochromatic which implies that
$C_2$ induces a threshold graph.

Now assume that $x$ and $y$ are both black. Consider a decomposition
tree for $C_2+\{x,y\}$. Assume that $x$ is closer to the root than
$y$. Then the operator of $x$ is a $\otimes_b$-operator since $x$ is
adjacent to $y$. All the operators that appear closer to the root
than the operator of $y$ are either $\oplus$ or $\otimes_w$
otherwise they are adjacent to $y$. All the vertices further from
the root than $y$ are white, otherwise they are adjacent to $x$. So
we may replace any $\otimes_b$-operator that is further from the
root than the operator of $y$ by a $\oplus$-operator. Thus the tree
for $C_2$ now only has $\oplus$- and $\otimes_w$-operators; {\em
i.e.\/}, the graph induced by $C_2$ is a special threshold graph.
\end{proof}

%\bigskip
Let $G_1=(V_1,E_1)$ and $G_2=(V_2,E_2)$ be two graphs.
The union of $G_1$ and $G_2$ is the graph
\[G_1 + G_2=(V_1+V_2,E_1+E_2).\]
The join of $G_1$ and $G_2$ is the graph obtained from the
union by adding an edge between every vertex of $V_1$ and
every vertex of $V_2$. We denote the join by $G_1 \times G_2$.

\begin{lemma}
\label{neighbors}
Let $G$ be a 2-threshold graph. The graph induced by every
nonempty neighborhood is one of the following.
\begin{enumerate}[\rm (a)]
\item a threshold graph, or
\item the union of two threshold graphs, or
\item the join of two threshold graphs.
\end{enumerate}
\end{lemma}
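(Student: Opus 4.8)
The plan is to analyze the structure of a neighborhood $N(v)$ of a vertex $v$ in a $2$-threshold graph $G$, using a fixed black-and-white coloring and decomposition tree $(T,f)$ for $G$. Recall that in any $2$-threshold graph the black vertices induce a threshold graph and the white vertices induce a threshold graph. So if $v$ is (say) black, then $N(v)\cap B$ and $N(v)\cap W$ each induce a threshold graph. The task therefore reduces to understanding how these two threshold graphs interact inside $G[N(v)]$: the claim is that they are either completely nonadjacent (case (b), a union), or completely adjacent (case (c), a join), or one of them is empty (case (a)). Actually, I would first observe that $G[N(v)]$ is itself a $2$-threshold graph (it is an induced subgraph), so if only one color appears in $N(v)$ we are immediately in case (a). The real work is the case where $N(v)$ contains vertices of both colors.

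First I would look at the position of $v$ in the decomposition tree $(T,f)$ and reduce to the moment $v$ is introduced. Let $\ell$ be the leaf with $f(\ell)=v$, and let $c$ be the internal node whose right child is $\ell$; its operator is either $\oplus$ (then $v$ is isolated, $N(v)=\varnothing$, excluded) or $\otimes_i$ for some color $i$. Say the operator at $c$ is $\otimes_b$; then at that stage $v$ becomes adjacent to exactly the black vertices present in the left subtree $W$ of $c$. Vertices added later (above $c$) may also become adjacent to $v$, but only through $\otimes_b$ operators (which attach black vertices) — a later $\otimes_w$ would not touch $v$, and a later $\otimes_b$ attaches a new black vertex adjacent to $v$ and to all black vertices below it, in particular to all of $N(v)\cap B$ already present. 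Tracking this, $N(v)\cap B$ is built up as a threshold graph in which each newly added vertex is universal to the previous black neighbors, and $N(v)\cap W$ consists of white vertices that were present in $W$ at the time of $c$ (none can be added later, since above $c$ only $\otimes_b$ operators affect $v$).

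The heart of the argument is then to show that between $N(v)\cap B$ and $N(v)\cap W$ the adjacency is all-or-nothing. Here I would use the forbidden-subgraph characterization: $G[N(v)]$ is $2$-threshold and hence, being an induced subgraph, contains no induced $C_5$, gem, $2K_2$, etc.\ (from Figure~\ref{fig:FIS_2_threshold_graphs}). Suppose for contradiction that some black $b\in N(v)\cap B$ and some white $w\in N(v)\cap W$ are nonadjacent while some black $b'$ and white $w'$ are adjacent. Using that $N(v)\cap B$ is a threshold graph (so its vertices are ordered by neighborhood inclusion) and likewise for $N(v)\cap W$, one can choose a small set of witnesses $\{b,b',w,w'\}\subseteq N(v)$ and argue about the edges among them; since $v$ is adjacent to all four, adding $v$ should produce one of the forbidden five-vertex graphs (a gem or a house-like configuration) — exactly the style of the local case analysis in the proof of Theorem~\ref{char special}. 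Pushing this through for all the configurations of private neighbors shows that if there is one cross-edge there must be all cross-edges, hence a join, and if there is a cross-nonedge everywhere, a union.

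The main obstacle I expect is exactly this last step: ruling out a "mixed" neighborhood where $B$-side and $W$-side are partially adjacent. There are several sub-cases depending on whether the relevant witnesses lie low in the threshold order (small neighborhoods) or high (large neighborhoods), and on whether the obstruction produced is a gem, a house-plus-pendant, a $C_5$, or a $4$-wheel-with-pendant; one has to be careful that the witnesses are genuinely distinct and that the induced subgraph on $\{v\}$ together with four chosen neighbors is really isomorphic to a member of the obstruction set. Once that dichotomy is established, concluding (a), (b), (c) is immediate, since a union of two threshold graphs and a join of two threshold graphs are exactly the two non-trivial outcomes, and a single color gives a threshold graph.
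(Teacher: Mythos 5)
Your reduction to the moment $v$ is introduced is the right starting point, but the dichotomy you then try to establish is the wrong one, and in fact it is false. You claim that $N(v)\cap B$ and $N(v)\cap W$ must be either completely joined or completely nonadjacent. Here is a counterexample built directly from a decomposition tree: add black $a_1$; add black $a_2$ with $\otimes_b$; add black $x$ with $\otimes_b$; add \emph{white} $w$ with $\otimes_b$; add black $b$ with $\otimes_b$. Then $N(x)=\{a_1,a_2,w,b\}$, the white vertex $w$ is adjacent to $a_1,a_2$ but not to $b$, so the color bipartition of $N(x)$ is neither a join nor a union. The source of the error is a misreading of the operators: the subscript of $\otimes_i$ governs which \emph{existing} vertices the new vertex attaches to, not the new vertex's own color, so a later $\otimes_b$-operator can introduce a white vertex into $N(v)$. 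This also undermines your claim that no white vertices enter $N(v)$ above the node where $v$ is introduced.

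The paper avoids all of this by partitioning $N(x)$ not by color but by position in the tree: $T_1$ is the set of vertices introduced \emph{after} $x$ whose operator attaches them to $x$ (these all carry $\otimes_{c(x)}$-operators, where $c(x)$ is the color of $x$), and $T_2$ is the set of vertices introduced \emph{before} $x$ to which $x$'s own operator attaches it. Each of $T_1$ and $T_2$ is shown to be a threshold graph by a short direct argument (in $T_1$ the vertices of color $c(x)$ form a clique, the others an independent set, with nested neighborhoods), and the cross-adjacency between $T_1$ and $T_2$ is forced by the operator semantics alone: all edges present when $o(x)=\otimes_{c(x)}$ (join), no edges when $o(x)=\otimes_{\bar{c}(x)}$ (union), and $T_2=\varnothing$ essentially when $o(x)=\oplus$. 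In the counterexample above, $T_1=\{w,b\}$ and $T_2=\{a_1,a_2\}$, and $N(x)$ is indeed the join of these two threshold graphs. No forbidden-subgraph analysis is needed; attempting one, as you propose, would in any case be on shaky ground, since the obstruction set for 2-threshold graphs is only partially listed in the paper and your key step is left as a sketch. To repair your proof you would need to replace the color bipartition by the $T_1/T_2$ bipartition, at which point the remaining verification is purely mechanical.
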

\begin{proof}
Consider a black-and-white coloring of the vertices and a
decomposition tree for $G$. Let $x$ and $y$ be two vertices.
We write $x \prec y$ if $x$ is closer to the root than $y$
in the decomposition tree. We write $o(x)$ for the operator
in the tree that is adjacent to $x$.

Let $x$ be a vertex and assume that $x$ is
black. We consider three cases.
\begin{description}
\item[\rm Case 1: $o(x)=\otimes_b$.]
In this case $N(x)=T_1+T_2$ where $T_1$ and $T_2$ are defined as follows.
\begin{eqnarray*}
 T_1&=&\{\; y \in V \;|\; y \prec x \;\;\mbox{and $o(y)=\otimes_b$}\;\}\\
T_2 &=& \{\;y \in V\;|\; x \prec y \;\;\mbox{and $y$ is black}\;\}.
\end{eqnarray*}
Notice that every vertex of $T_1$ is adjacent to every vertex of
$T_2$. All vertices of $T_2$ are black thus $T_2$ induces a threshold graph.
The black vertices of $T_1$ are a clique and the white vertices
are an independent set. Every white vertex is adjacent to those black
vertices that are further from the root. This implies that $T_1$ is
a threshold graph (if it is nonempty)~\cite{kn:mahadev}.
\item[\rm Case 2: $o(x)=\oplus$.]
In this case the neighbors of $x$ are the vertices of $T_1$ described
above.
\item[\rm Case 3: $o(x)=\otimes_w$.]
Now $N(x)=T_1+T_2$ where $T_1$ and $T_2$ are defined as follows.
\begin{eqnarray*}
T_1&=&\{\; y \in V \;|\; y \prec x \;\;\mbox{and $o(y)=\otimes_b$}\;\}\\
T_2&=&\{\;y \in V\;|\; x \prec y \;\;\mbox{and $y$ is white}\;\}.
\end{eqnarray*}
In this case there are no edge between vertices of $T_1$ and
vertices of $T_2$. If $T_2 \neq \es$ then the graph induced by $T_2$
is a threshold graph since all vertices are white.
We proved above that $T_1$ is also a threshold graph.
\end{description}
A similar analysis holds when $x$ is a white vertex. This proves the
lemma. \end{proof}

The complement of a graph $G=(V,E)$ is the graph that has $V$
as its vertices and that has those pairs of vertices adjacent that
are not adjacent in $G$. We denote the complement of $G$ by $\Bar{G}$.

Let $G$ be a graph. A vertex of $G$ is good if its neighborhood
is empty or, a threshold graph or, the union of two threshold graphs
or the join of two threshold graphs. The graph $G$ is good if
every vertex of $G$ is good. By Lemma~\ref{neighbors}
a 2-threshold graph is good. Consider a vertex $x$ in $G$.
A local complementation at $x$ is the operation which complements the
graph induced by $N(x)$. Note that the class of good graphs is
hereditary and closed under local complementations.

\begin{theorem}
\label{char good}
A graph $G$ is good if and only if
$G$ has no induced gem or, $C_4+K_1$ with a universal vertex or,
the local complement of this with
respect to the universal vertex or, $3K_2$ with a universal vertex
or, an octahedron with a universal vertex.
\end{theorem}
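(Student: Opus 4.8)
The statement asserts a forbidden-subgraph characterization of the class of \emph{good} graphs, where a vertex is good exactly when its neighborhood is empty, a threshold graph, a disjoint union of two threshold graphs, or a join of two threshold graphs. The necessity direction is routine: for each of the five named graphs $H$ (the gem; $C_4+K_1$ with a universal vertex added; its local complement at that universal vertex; $3K_2$ with a universal vertex added; an octahedron with a universal vertex added) one checks that the designated universal vertex fails the goodness condition, i.e.\ that its neighborhood induces a graph which is none of: threshold, union of two thresholds, join of two thresholds. For the gem the ``bad'' vertex is the degree-4 apex, whose neighborhood is a $P_4$; a $P_4$ is not a threshold graph, and it is connected so not a disjoint union of two, and its complement is again a $P_4$ which is connected so it is not a join of two threshold graphs either. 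The remaining four are handled the same way: the neighborhood of the universal vertex is $C_4+K_1$, $\Bar{C_4+K_1}=2K_2+K_1$ (wait—take the local complement as specified), $3K_2$, or an octahedron, respectively, and in each case one verifies directly that the graph is not threshold and neither it nor its complement splits as a disjoint union of two threshold graphs. One should also remark that each of these five graphs is minimal with respect to this failure, so they genuinely belong to the obstruction set. Since goodness is a vertex-local property, a graph is not good iff it contains an induced subgraph of the form $H$ = (a single ``apex'' vertex $v$) $\cup$ (an induced subgraph of $N(v)$) in which $v$'s neighborhood is a minimal non-threshold-decomposable graph; the five listed graphs should be exactly those that arise.

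For sufficiency, the plan is as follows. Fix a graph $G$ with none of the five forbidden induced subgraphs and fix an arbitrary vertex $v$; I must show $N(v)$ is empty, threshold, a union of two threshold graphs, or a join of two threshold graphs. Write $H = G[N(v)]$; since adding $v$ back makes $v$ universal in $G[N[v]]$, the forbidden-subgraph hypothesis translates into: $H$ has no induced gem-minus-apex $=P_4$? No—more carefully, $H$ cannot contain an induced $P_4$ (else $G$ has an induced gem via $v$), $H$ cannot contain an induced $C_4+K_1$, $H$ cannot contain the local complement of $C_4+K_1$ at the isolated vertex, $H$ cannot contain an induced $3K_2$, and $H$ cannot contain an induced octahedron. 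So the real content is a characterization of $P_4$-free graphs (cographs!) that in addition avoid those four small five-vertex/six-vertex configurations, and showing such graphs are exactly threshold, union-of-two-threshold, or join-of-two-threshold. Since $H$ is a cograph, either $H$ is disconnected, or $\Bar H$ is disconnected, or $H$ is a single vertex; I would induct on $|V(H)|$ using the cotree. If $\Bar H$ is disconnected, $H = H_1 \times H_2$ is a join of two nonempty cographs; I then argue that forbidding the octahedron-plus-universal and $3K_2$-plus-universal configurations forces each $H_i$ to be threshold, so $H$ is a join of two threshold graphs (being careful to also rule out $H_i$ itself being a further join, which would give a finer but still acceptable structure, or reduce to the threshold case directly). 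If $H$ is disconnected, a symmetric argument with the complemented forbidden graphs shows $H$ is a union of two threshold graphs. The base cases (small $H$) are immediate.

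The main obstacle is the sufficiency bookkeeping in the join case: a join of two cographs need not be a join of two \emph{threshold} graphs, and a join of more than two threshold graphs, or a join of a threshold graph with a union of two threshold graphs, could a priori sneak in. The key lemma I expect to need is: \emph{if $G[N(v)]$ is a join $H_1\times H_2$ with both parts having at least two vertices and one part non-threshold, then $N(v)$ together with $v$ contains one of the octahedron-with-universal or $3K_2$-with-universal obstructions}. Concretely, a non-threshold graph on $\ge 3$ vertices contains an induced $2K_2$, $C_4$, or $P_4$; since $H$ is $P_4$-free the offending part $H_1$ contains an induced $2K_2$ or $C_4$, and joining that with any edge or non-edge from $H_2$ (which exists if $|H_2|\ge 2$) produces $C_4\times K_2$ (the octahedron) or $2K_2\times \overline{K_2} \supseteq 3K_2$-type configurations inside $N(v)$, hence with $v$ one of the forbidden graphs. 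Pinning down exactly which small join yields which obstruction, and confirming the five listed graphs suffice with nothing missing, is the delicate part; everything else is a straightforward cograph induction.
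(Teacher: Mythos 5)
Your overall architecture---necessity by inspecting the universal vertex, sufficiency by observing that the absence of the gem makes $G[N(v)]$ a cograph and then decomposing along the cotree---matches the paper's, but your sufficiency argument has a genuine gap exactly at the point you yourself flag as ``the delicate part,'' and one concrete claim you make there is false. You assert that a join factor containing a $2K_2$, joined with a non-edge of the other factor, yields a ``$2K_2\times \overline{K_2}\supseteq 3K_2$-type configuration''; in fact $2K_2\times \overline{K_2}$ contains no $3K_2$ (the two added vertices are adjacent to all four vertices of the $2K_2$, so no third independent edge exists). The obstruction that actually arises is the butterfly $2K_2\times K_1$, i.e.\ the complement of $C_4+K_1$, which together with $v$ is the ``local complement'' graph on the list. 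Beyond that, your key lemma only treats $H=H_1\times H_2$ with both factors of size at least two, and says nothing about joins with a singleton factor or with three or more factors; note that the octahedron is the join of three copies of $\overline{K_2}$ and is \emph{not} a join of two threshold graphs, so grouping the join factors into two threshold pieces genuinely has to be argued. You also do not verify the small fact needed in the disconnected case that a connected $P_4$-free graph containing an induced $2K_2$ contains a butterfly.

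The idea you are missing, which makes all of this bookkeeping disappear in the paper, is that both the property ``$v$ is good'' and the list of five obstructions are closed under local complementation at $v$: complementing $N(v)$ swaps union with join, preserves threshold graphs, fixes the gem since $\overline{P_4}=P_4$, swaps the $C_4+K_1$ obstruction with its local complement (the butterfly with a universal vertex), and swaps $3K_2$ with the octahedron. Hence only the disconnected case of $G[N(v)]$ needs a direct argument: no $3K_2$ bounds the number of components with more than one vertex by two; no $C_4+K_1$ forbids a $C_4$ inside a component; no butterfly forbids a $2K_2$ inside a (connected, $P_4$-free) component; so every component is threshold and $G[N(v)]$ is a union of two threshold graphs. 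The connected case then follows by complementing $N(v)$ and applying the disconnected case. If you insist on attacking the join case directly, you must correct the obstruction identification as above and show how to merge an arbitrary number of threshold join factors into exactly two threshold groups, which is where the real content of that route lies.
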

\begin{proof}
It is easy to check that the universal vertex of each of
the mentioned graphs is not good. We prove the converse.
Let $x$ be a vertex with a nonempty neighborhood. Since
there is no gem the graph induced by $N(x)$ is a cograph.
Assume that this graph is disconnected. Let $C_1,\ldots,C_t$
be the components of the graph induced by $N(x)$.
There is no $3K_2$ in the graph induced by $N(x)$ so there
are at most two components with more than one vertex.
Since there is no 4-wheel with a pendant vertex attached to its
center the graph induced by each component $C_i$ has
no $C_4$. Assume that $C_i$ has an induced $2K_2$. Since
the graph induced by $C_i$ is connected and since there is
no $P_4$ in $G[C_i]$ this graph has a butterfly. Then $G$
has an induced subgraph that is isomorphic a butterfly
with a universal vertex. The butterfly is the complement
of $C_4+K_1$.

Now assume that the graph induced by $N(x)$ is connected. Consider
the graph obtained by complementing $N(x)$. By the argument above
$x$ is good in this graph. This implies that $x$ is also good in
$G$. \end{proof}

\begin{figure}[t]
\setlength{\unitlength}{1.5pt}
\begin{center}
\begin{picture}(152.5,116)(2.5,-51)
%row 1
%1

\put(2.5,55){\circle*{2.7}} \put(12.5,55){\circle*{2.7}}
\put(2.5,65){\circle*{2.7}} \put(12.5,65){\circle*{2.7}}
\put(2.5,55){\line(1,0){10}} \put(2.5,55){\line(0,1){10}}
\put(12.5,65){\line(-1,0){10}} \put(12.5,65){\line(0,-1){10}}
\put(3.5,45){(1)}

%2

\put(22.5,55){\circle{2.7}} \put(32.5,55){\circle{2.7}}
\put(22.5,65){\circle{2.7}} \put(32.5,65){\circle{2.7}}
\put(23.9,55){\line(1,0){7.3}} \put(22.5,56.4){\line(0,1){7.3}}
\put(31.1,65){\line(-1,0){7.3}} \put(32.5,63.6){\line(0,-1){7.3}}
\put(23.5,45){(2)}

%3

\put(42.5,55){\circle{2.7}} \put(52.5,55){\circle{2.7}}
\put(42.5,65){\circle*{2.7}} \put(52.5,65){\circle*{2.7}}
\put(43.9,55){\line(1,0){7.3}} \put(42.5,56.4){\line(0,1){7.3}}
\put(52.5,65){\line(-1,0){10}} \put(52.5,63.6){\line(0,-1){7.3}}
\put(43.5,45){(3)}

%4
\put(62.5,55){\circle*{2.7}} \put(72.5,55){\circle*{2.7}}
\put(62.5,65){\circle*{2.7}} \put(72.5,65){\circle*{2.7}}
\put(62.5,55){\line(1,0){10}} \put(72.5,65){\line(-1,0){10}}
\put(63.5,45){(4)}

%5
\put(82.5,55){\circle{2.7}} \put(92.5,55){\circle{2.7}}
\put(82.5,65){\circle{2.7}} \put(92.5,65){\circle{2.7}}
\put(83.9,55){\line(1,0){7.3}} \put(91.1,65){\line(-1,0){7.3}}
\put(83.5,45){(5)}

%6
\put(102.5,55){\circle*{2.7}} \put(112.5,55){\circle{2.7}}
\put(102.5,65){\circle*{2.7}} \put(112.5,65){\circle{2.7}}
\put(103.9,55){\line(1,0){7.4}} \put(111.1,65){\line(-1,0){7.4}}
\put(103.5,45){(6)}

%7
\put(122.5,55){\circle*{2.7}} \put(132.5,55){\circle*{2.7}}
\put(122.5,65){\circle*{2.7}} \put(132.5,65){\circle*{2.7}}
\put(122.5,55){\line(0,1){10}} \put(132.5,65){\line(0,-1){10}}
\put(132.5,65){\line(-1,0){10}} \put(123.5,45){(7)}

%8
\put(142.5,55){\circle{2.7}} \put(152.5,55){\circle{2.7}}
\put(142.5,65){\circle{2.7}} \put(152.5,65){\circle{2.7}}
\put(142.5,56.4){\line(0,1){7.4}} \put(152.5,63.7){\line(0,-1){7.4}}
\put(151.1,65){\line(-1,0){7.4}} \put(143.5,45){(8)}

%row 2

%9
\put(2.5,25){\circle*{2.7}} \put(12.5,25){\circle*{2.7}}
\put(2.5,35){\circle{2.7}} \put(12.5,35){\circle{2.7}}
\put(2.5,26.3){\line(0,1){7.3}} \put(11.1,35){\line(-1,0){7.3}}
\put(12.5,33.6){\line(0,-1){7.3}} \put(3.5,14){(9)}

%10
\put(22.5,25){\circle{2.7}} \put(32.5,25){\circle{2.7}}
\put(22.5,35){\circle*{2.7}} \put(32.5,35){\circle*{2.7}}
\put(22.5,26.3){\line(0,1){7.5}} \put(32.5,35){\line(-1,0){10}}
\put(32.5,33.8){\line(0,-1){7.5}} \put(21.5,14){(10)}

%11
\put(42.5,25){\circle*{2.7}} \put(42.5,35){\circle{2.7}}
\put(52.5,30){\circle*{2.7}} \put(62.5,30){\circle*{2.7}}
\put(42.5,25){\line(2,1){10}} \put(52.5,30){\line(-2,1){8.8}}
\put(52.5,30){\line(1,0){10}} \put(47,14){(11)}

%12
\put(72.5,25){\circle{2.7}} \put(72.5,35){\circle*{2.7}}
\put(82.5,30){\circle{2.7}} \put(92.5,30){\circle{2.7}}
\put(73.7,25.6){\line(2,1){7.6}} \put(73.6,34.45){\line(2,-1){7.7}}
\put(83.8,30){\line(1,0){7.3}} \put(77,14){(12)}

%13
\put(107.5,22.5){\circle{2.7}} \put(102.5,30){\circle*{2.7}}
\put(112.5,30){\circle{2.7}} \put(107.5,37.5){\circle*{2.7}}
\put(108.3,23.7){\line(2,3){3.5}} \put(106.7,23.7){\line(-2,3){4.2}}
\put(103.7,30){\line(1,0){7.5}} \put(107.5,37.5){\line(-2,-3){4.3}}
\put(107.5,37.5){\line(2,-3){4.3}} \put(101.5,14){(13)}

%14
\put(122.5,22.5){\circle{2.7}} \put(122.5,30){\circle{2.7}}
\put(132.5,30){\circle*{2.7}} \put(127.5,37.5){\circle{2.7}}
\put(122.5,23.8){\line(0,1){4.9}} \put(123.8,30){\line(1,0){7.4}}
\put(126.7,36.3){\line(-2,-3){3.5}}
\put(128.3,36.3){\line(2,-3){3.6}} \put(122,14){(14)}

%15
\put(142.5,22.5){\circle*{2.7}} \put(142.5,30){\circle*{2.7}}
\put(152.5,30){\circle{2.7}} \put(147.5,37.5){\circle*{2.7}}
\put(142.5,23.8){\line(0,1){5.0}} \put(143.7,30){\line(1,0){7.5}}
\put(147.5,37.5){\line(-2,-3){5}}
\put(148.2,36.45){\line(2,-3){3.6}} \put(142,14){(15)}

%row 3

%16

\put(2.5,0){\circle{2.7}} \put(9.5,0){\circle{2.7}}
\put(16.5,0){\circle{2.7}} \put(23.5,0){\circle*{2.7}}
\put(30.5,0){\circle{2.7}}
\multiput(3.75,0)(7,0){4}{\line(1,0){4.45}}

\put(10.5,-16){(16)}

%17

\put(40.5,0){\circle*{2.7}} \put(47.5,0){\circle*{2.7}}
\put(54.5,0){\circle*{2.7}} \put(61.5,0){\circle{2.7}}
\put(68.5,0){\circle*{2.7}}
\multiput(41.7,0)(7,0){4}{\line(1,0){4.5}} \put(48.5,-16){(17)}

%18
%\put(70,-5){\circle{2.7}} \put(60,0){\circle*{2.7}}
%\put(65,5){\circle*{2.7}} \put(75,5){\circle*{2.7}}
%\put(80,0){\circle{2.7}} \put(68.8,-4.4){\line(-2,1){7.6}}
%\put(69.4,-3.8){\line(-1,2){3.9}} \put(70.6,-3.8){\line(1,2){3.8}}
%\put(71.2,-4.4){\line(2,1){7.6}} \put(60,0){\line(1,1){5}}
%\put(65,5){\line(1,0){10}} \put(75,5){\line(1,-1){4}}

\put(79.5,-6){\circle*{2.7}} \put(89.5,-6){\circle*{2.7}}
\put(79.5,4){\circle*{2.7}} \put(89.5,4){\circle{2.7}}
\put(84.5,9){\circle{2.7}} \put(79.5,-6){\line(0,1){10}}
\put(88.1,4){\line(-1,0){7.4}} \put(89.5,2.6){\line(0,-1){7.4}}
\put(83.45,7.95){\line(-1,-1){3.1}} \put(85.5,8){\line(1,-1){3.0}}
\put(78.5,-16){(18)}

%19
%\put(98,-5){\circle*{2.7}} \put(88,0){\circle{2.7}}
%\put(93,5){\circle{2.7}} \put(103,5){\circle{2.7}}
%\put(108,0){\circle*{2.7}} \put(96.8,-4.4){\line(-2,1){7.6}}
%\put(97.4,-3.8){\line(-1,2){3.8}} \put(98.6,-3.8){\line(1,2){3.8}}
%\put(98,-5){\line(2,1){10}} \put(89,1){\line(1,1){3.0}}
%\put(94.3,5){\line(1,0){7.3}} \put(104,4){\line(1,-1){3.1}}

\put(102.5,-6){\circle{2.7}} \put(112.5,-6){\circle{2.7}}
\put(102.5,4){\circle{2.7}} \put(112.5,4){\circle*{2.7}}
\put(107.5,9){\circle*{2.7}} \put(102.5,-4.6){\line(0,1){7.4}}
\put(111.1,4){\line(-1,0){7.3}} \put(112.5,2.6){\line(0,-1){7.4}}
\put(106.5,8){\line(-1,-1){3.1}} \put(107.5,9){\line(1,-1){5}}

\put(101.5,-16){(19)}

%20

\put(128.5,-6){\circle*{2.7}} \put(148.5,-6){\circle{2.7}}
\put(138.5,9){\circle*{2.7}} \put(138.5,-1){\circle{2.7}}
\put(129.8,-6){\line(1,0){17.4}} \put(129.3,-4.8){\line(2,3){9}}
\put(147.7,-4.8){\line(-2,3){9}} \put(138.5,0.4){\line(0,1){7.2}}
\put(137.3,-1.6){\line(-2,-1){7.6}}
\put(139.7,-1.6){\line(2,-1){7.6}} \put(133,-16){(20)}

%20

%\put(115,-5){\circle*{2.7}} \put(115,5){\circle*{2.7}}
%\put(130,-5){\circle{2.7}} \put(130,5){\circle*{2.7}}
%\put(122.5,0){\circle{2.7}} \put(121.3,0.8){\line(-3,2){6.3}}
%\put(121.3,-0.8){\line(-3,-2){6.3}} \put(123.7,0.8){\line(3,2){6.3}}
%\put(123.7,-0.8){\line(3,-2){5.3}} \put(115,-3.7){\line(0,1){7.4}}
%\put(130,-3.7){\line(0,1){7.4}} \put(116.5,-16){(20)}

%21

%\put(140,-5){\circle{2.7}} \put(140,5){\circle{2.7}}
%\put(155,-5){\circle*{2.7}} \put(155,5){\circle{2.7}}
%\put(147.5,0){\circle*{2.7}} \put(146.3,0.8){\line(-3,2){5.2}}
%\put(146.3,-0.8){\line(-3,-2){5.2}} \put(148.7,0.8){\line(3,2){5.2}}
%\put(147.5,0){\line(3,-2){7.5}} \put(140,-3.7){\line(0,1){7.4}}
%\put(155,-3.7){\line(0,1){7.4}} \put(141.5,-16){(21)}

%row 4

%21

\put(2.5,-32.5){\circle{2.7}} \put(12.5,-32.5){\circle*{2.7}}
\put(2.5,-40){\circle{2.7}} \put(12.5,-40){\circle{2.7}}
\put(2.5,-25){\circle*{2.7}} \put(2.5,-38.6){\line(0,1){4.8}}
\put(2.5,-31.1){\line(0,1){4.8}} \put(3.9,-32.5){\line(1,0){7.4}}
\put(12.5,-33.9){\line(0,-1){4.8}} \put(1.5,-51){(21)}

%22

\put(22.5,-32.5){\circle*{2.7}} \put(32.5,-32.5){\circle{2.7}}
\put(22.5,-40){\circle*{2.7}} \put(32.5,-40){\circle*{2.7}}
\put(22.5,-25){\circle{2.7}} \put(22.5,-38.6){\line(0,1){5.3}}
\put(22.5,-31.1){\line(0,1){4.8}} \put(23.9,-32.5){\line(1,0){7.3}}
\put(32.5,-33.9){\line(0,-1){5.2}} \put(21.5,-51){(22)}

%23

%\put(42.5,-40){\circle*{2.7}} \put(57.5,-40){\circle{2.7}}
%\put(42.5,-25){\circle*{2.7}} \put(57.5,-25){\circle*{2.7}}
%\put(50,-32.5){\circle{2.7}} \put(43.9,-40){\line(1,0){12.3}}
%\put(42.5,-40){\line(0,1){15}} \put(57.5,-25){\line(-1,0){15}}
%\put(57.5,-26.4){\line(0,-1){12.3}}
%\multiput(43.5,-39.0)(7.5,7.5){2}{\line(1,1){5.6}}
%\multiput(56.5,-39.0)(-7.5,7.5){2}{\line(-1,1){5.6}}

\put(44.5,-41){\circle{2.7}} \put(54.5,-41){\circle{2.7}}
\put(44.5,-34){\circle*{2.7}} \put(54.5,-34){\circle*{2.7}}
\put(44.5,-24){\circle{2.7}} \put(54.5,-24){\circle*{2.7}}
\put(45.9,-41){\line(1,0){7.3}} \put(44.5,-32.6){\line(0,1){7.4}}
\put(45.9,-24){\line(1,0){7.3}} \put(54.5,-25.4){\line(0,-1){7.4}}

\put(43.5,-51){(23)}

%24

%\put(68,-40){\circle{2.7}} \put(83,-40){\circle*{2.7}}
%\put(68,-25){\circle{2.7}} \put(83,-25){\circle{2.7}}
%\put(75.5,-32.5){\circle*{2.7}} \put(69.4,-40){\line(1,0){12.3}}
%\put(68,-38.6){\line(0,1){12.3}} \put(81.6,-25){\line(-1,0){12.3}}
%\put(83,-26.4){\line(0,-1){12.3}}
%\multiput(69,-39)(7.5,7.5){2}{\line(1,1){5.6}}
%\multiput(82,-39)(-7.5,7.5){2}{\line(-1,1){5.6}}

\put(70,-41){\circle*{2.7}} \put(80,-41){\circle*{2.7}}
\put(70,-34){\circle{2.7}} \put(80,-34){\circle{2.7}}
\put(70,-24){\circle*{2.7}} \put(80,-24){\circle{2.7}}
\put(71.4,-41){\line(1,0){7.3}} \put(70,-32.6){\line(0,1){7.4}}
\put(71.4,-34){\line(1,0){7.3}} \put(80,-25.4){\line(0,-1){7.2}}

\put(69,-51){(24)}

%25
\put(93,-40){\circle*{2.7}} \put(103,-40){\circle*{2.7}}
\put(93,-32.5){\circle*{2.7}} \put(103,-32.5){\circle{2.7}}
\put(93,-25){\circle{2.7}} \put(103,-25){\circle{2.7}}
\multiput(94.3,-40)(0,7.5){3}{\line(1,0){7.4}} \put(91.5,-51){(25)}

%26
\put(113,-40){\circle{2.7}} \put(123,-40){\circle*{2.7}}
\put(113,-32.5){\circle*{2.7}} \put(123,-32.5){\circle{2.7}}
\put(113,-25){\circle*{2.7}} \put(123,-25){\circle{2.7}}
\multiput(114.4,-40)(0,7.5){2}{\line(1,0){7.2}}
\put(113,-38.6){\line(0,1){12.4}}
\multiput(123,-38.6)(0,7.5){2}{\line(0,1){4.7}}
\put(111.5,-51){(26)}

%27

\put(130,-32.5){\circle{2.7}} \put(135,-32.5){\circle{2.7}}
\put(140,-32.5){\circle*{2.7}} \put(145,-32.5){\circle{2.7}}
\put(150,-32.5){\circle*{2.7}} \put(155,-32.5){\circle*{2.7}}
\multiput(131.4,-32.5)(5,0){5}{\line(1,0){2.3}}
\put(135.5,-51){(27)}

\end{picture}
\end{center}
\vspace*{-0.4cm} \caption{Forbidden induced subgraphs of partitioned
black-and-white threshold
graphs}\label{fig:FIS_partitioned_2_threshold_graphs}
\end{figure}
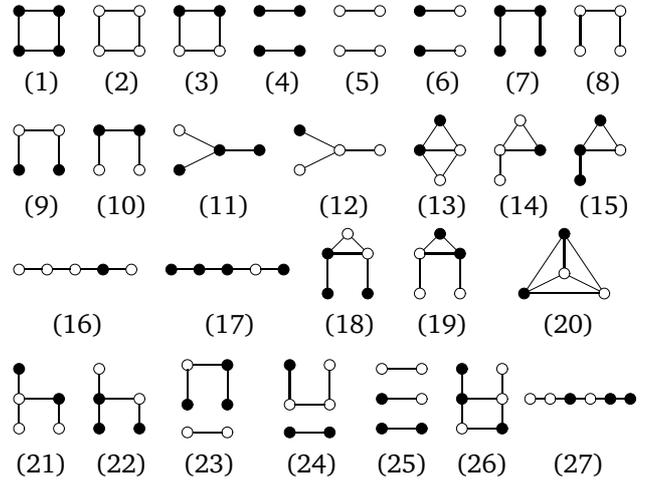

\begin{theorem}
\label{partitioned case}
A partitioned graph is a 2-threshold graph if and only
if it does not contain any of the graphs of
Figure~\vref{fig:FIS_partitioned_2_threshold_graphs}
as an induced subgraph.
\end{theorem}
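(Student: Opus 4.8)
The plan is to prove the two directions separately, with the forward direction (necessity) being routine and the reverse direction (sufficiency) being the substantial work.

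\textit{Necessity.} First I would verify that none of the $27$ partitioned graphs in Figure~\vref{fig:FIS_partitioned_2_threshold_graphs} is a partitioned $2$-threshold graph. This amounts to checking, for each such graph, that there is no elimination ordering of the vertices respecting the given colors — equivalently, no decomposition tree built from $\oplus$, $\otimes_b$, $\otimes_w$ operators consistent with the coloring. Since each obstruction has at most a handful of vertices, this is a finite, mechanical check: in each graph, exhibit that no vertex is isolated and no vertex is adjacent to exactly the vertices of one color class among the rest, and that the same holds after deleting any vertex for which the property would fail to propagate; i.e.\ show the defect is hereditary-minimal.

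\textit{Sufficiency.} Suppose $G$, with its given black-and-white coloring, contains no induced subgraph from Figure~\vref{fig:FIS_partitioned_2_threshold_graphs}. I would argue by induction on $|V(G)|$, showing that there is always a vertex that can be removed by one of the operators $\oplus$, $\otimes_b$, $\otimes_w$ so that the remaining partitioned graph still avoids all obstructions. Concretely, the strategy is: (1) since $3K_2$ and its colored variants are excluded, bound the number of nontrivial components and use the monochromatic-component analysis already available (the unpartitioned Lemma on components, plus Theorem~\ref{char special}) to reduce to a connected $G$; (2) within a connected $G$, use the exclusion of the small forbidden graphs to locate a vertex $x$ whose colored neighborhood is so simple that the appropriate $\otimes$-operator applies — this is where Lemma~\ref{neighbors} and Theorem~\ref{char good} are the right tools, since every neighborhood in a good graph is a threshold graph, a union of two threshold graphs, or a join of two threshold graphs, and the colored obstructions will force the neighborhood of an extremal vertex to be monochromatic or a clean join/union across the two color classes; (3) verify that after deleting such $x$ no new obstruction is created — but that is automatic since the class defined by the forbidden list is hereditary, so one only needs the \emph{existence} of a removable vertex, not preservation.

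The main obstacle I expect is step (2): pinning down exactly which vertex to eliminate and proving its colored neighborhood has the required form. The $27$ obstructions are not chosen arbitrarily — they should be precisely the minimal colored configurations that block every possible choice of first eliminated vertex — so the proof must be a careful case analysis, organized by the structure of the neighborhoods (Lemma~\ref{neighbors}), showing that if \emph{no} vertex is removable then one of the listed graphs is present. I would structure this as: pick a vertex $x$ extremal in a threshold-type order inside one color class (say black), analyze $N(x)$ using goodness, and in each case where the natural operator fails, trace out an induced obstruction on at most six vertices. I would lean heavily on Theorem~\ref{char good} to keep the neighborhood cases finite, and on Theorem~\ref{char special} to dispose of the case where one color class is entirely absent from some component. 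The bookkeeping — matching each failure mode to its figure number — is the tedious but essential core of the argument.
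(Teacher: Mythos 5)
Your skeleton---induction on $|V|$, reduction to the connected case via the exclusion of colored $3K_2$'s, then exhibiting a vertex $x$ with $N(x)$ or $N[x]$ equal to an entire color class, with heredity making the recursion automatic---is exactly the paper's. The gap is in your step (2), where you propose to locate the eliminable vertex by analyzing $G[N(x)]$ through Lemma~\ref{neighbors} and Theorem~\ref{char good}. Those results constrain only the isomorphism type of the induced subgraph on $N(x)$, whereas eliminability is the global condition that $x$ be adjacent to \emph{all} vertices of one color and to \emph{none} of the remaining vertices; it is a statement about $V-N[x]$ as much as about $N(x)$. Goodness cannot deliver this: every vertex of an induced path has a neighborhood that is an independent set of at most two vertices, hence a threshold graph or a union of two threshold graphs, so a suitably colored $P_5$ or $P_6$ is good at every vertex---and yet such colored paths occur among the obstructions (16), (17) and (27) of Figure~\ref{fig:FIS_partitioned_2_threshold_graphs}. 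No inspection of a single vertex's neighborhood, however extremal that vertex is chosen within its color class, will detect them.

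What is missing is the mechanism that bridges from ``no listed obstruction'' to ``some vertex dominates a color class,'' and the paper supplies it by a trichotomy on the global structure of $G$ rather than by neighborhood analysis. If $G$ contains a house, the rooftop together with its common neighbors forms a set $S$ that dominates one color class and induces a threshold graph (using the exclusion of the octahedron, the gem and $K_2\times 2K_2$), and an extremal vertex of $S$ is eliminable. If $G$ contains a $2K_2$ but no house, a shortest path between the two edges is examined; the colored path obstructions force it to be short and force the desired vertex to exist. If $G$ contains neither a house nor a $2K_2$, it is a special 2-threshold graph and the explicit linear order constructed in the proof of Theorem~\ref{char special} finishes. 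Your plan needs an argument of this longer-range kind in place of step (2); as written, the passage from goodness to eliminability fails.
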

\begin{proof}
It is easy to see that none of the graphs in
Figure~\ref{fig:FIS_partitioned_2_threshold_graphs} is a 2-threshold
graph. We prove the converse. Let $G$ be a partitioned graph with
none of the graphs of
Figure~\ref{fig:FIS_partitioned_2_threshold_graphs} as an induced
subgraph. It can be checked that none of the graphs in
Figure~%
\ref{fig:FIS_2_threshold_graphs}
is an induced subgraph of the underlying unpartitioned graph.

The case where there are two components with at least two vertices
is easy to check. Assume that $G$ is connected. We prove that there
exists a vertex $x$ such that $N(x)$ or $N[x]$ is exactly the set of
all the white vertices or all the black vertices in $G$. Assume that
$G$ has a house $H=[1;2,3,4,5]$ where $[2,3,4,5]$ is the induced
4-cycle in $H$ and vertex $1$ is the rooftop of $H$ adjacent to
vertices $2$ and $5$. Assume that vertices $1$, $2$, $3$ and $5$ are
colored black and that vertex $4$ is colored white. Let $S \subseteq
V-H$ be the set of vertices that are adjacent to $1$, $2$, $3$ and
$5$, and nonadjacent to~$4$. Add vertex $2$ also to $S$. We claim
that $S$ contains a vertex $x$ such that $N(x)$ or $N[x]$ is the set
of all black vertices in $G$. It can be checked that every black
vertex is in $S$ or is adjacent to a vertex in $S$. The graph $G[S]$
is a threshold graph since $G$ has no octahedron, or gem or a $K_2
\times 2K_2$. The black vertices in $S$ are a clique and the white
vertices are an independent set. It is now easy to check that there
exists a vertex $x \in S$ such that $N(x)$ or $N[x]$ is the set of
black vertices in~$G$.

The other colorings of the house can be dealt with in a similar
fashion.

Assume that $G$ contains no house. Assume that there is a $2K_2$;
$(p,q)+(r,s)$. Assume that $p$ and $q$ are black and that $r$ and
$s$ are white. Consider a shortest path between $\{p,q\}$ and
$\{r,s\}$. Assume that this is a $P_6$;
\[P=[p,q,a,b,r,s].\] Then $a$ is black and $b$ is white.
Assume that $q$ has a white neighbor $\alpha$ and that $r$ has a
black neighbor $\beta$. Then
\[N(\alpha) \cap P = \{p,q,a\} \quad\text{and}\quad
N(\beta)\cap P = \{b,r,s\}.\]
Furthermore, $\alpha$ and $\beta$ are adjacent. Then
$\{q,a,b,\alpha,\beta\}$ induces a house.

Assume that $q$ has only black neighbors. It is easy to check that
there is a vertex $x \in N[q]$ such that $N(x)$ or $N[x]$ is the
set of black vertices.

Other cases can be dealt with in a similar manner.

Assume that there is no house and no $2K_2$. Then $G$ is special. It
is easy to check that the claim holds true in that case. This proves
the theorem. \end{proof}

\section{The switching class of threshold graphs}
%%%%%%%%%%%%%%%%%%%%%%%%%%%%%%%%%%%%%%%%%%%%%%%%%

If we allow a $\otimes$-operator, which adds a universal vertex,
then we obtain a slightly bigger class of graphs. We call
these graphs extended 2-threshold graphs.
It is easy to see that if $G$ is an extended 2-threshold graph
then so is its complement.

\bigskip
We obtain a class of graphs that is contained
in the 2-threshold graphs if we allow only $\otimes_w$-operators
and $\otimes_b$-operators. We call these graphs
restricted 2-threshold graphs.
It is easy to see that also the restricted 2-threshold graphs
are self-complementary. We prove in this section that
the restricted 2-threshold graphs are exactly the graphs that
are switching equivalent to threshold graphs.

\begin{definition}
Let $G=(V,E)$ be a graph and let $S \subseteq V$. {\em Switching
$G$ with respect to $S$\/} is the operation that adds all edges
to $G$ between nonadjacent pairs with one element in $S$ and the
other not in $S$ and that removes all edges between pairs with
one element in $S$ and the other not in $S$.
\end{definition}

Switching is an equivalence relation on graphs. The equivalence
classes are called switching classes. The work on switching
classes was initiated by van~Lint and
Seidel~\cite{kn:lint,kn:seidel}.

\begin{theorem}
If $G$ is in the switching class of a threshold graph then $G$ is
a restricted 2-threshold graph.
\end{theorem}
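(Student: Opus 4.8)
The plan is to build the required decomposition tree for $G$ directly from the creation sequence of a threshold graph in its switching class. Since switching is an equivalence relation, we may write $G$ as the graph obtained from some threshold graph $H$ by switching with respect to a set $S\subseteq V$, where $V=V(H)=V(G)$. As recalled above, $H$ admits a decomposition tree that uses only $\oplus$- and $\otimes$-operators; equivalently, the vertices of $H$ can be enumerated as $[v_1,\dots,v_n]$ so that for every $i\ge 2$ the vertex $v_i$ is either isolated or universal in $H[\{v_1,\dots,v_i\}]$. First I would fix such an enumeration and colour $v_i$ white when $v_i\in S$ and black when $v_i\notin S$.

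Next I would verify that this colouring, together with the same enumeration $[v_1,\dots,v_n]$, determines a decomposition tree for $G$ in which every operator is $\otimes_w$ or $\otimes_b$. The only fact needed is that switching with respect to $S$ toggles precisely the pairs with one endpoint in $S$ and one outside: for $i\ge 2$ and $j<i$ we have $v_iv_j\in E(G)$ if and only if $v_iv_j\in E(H)$ when $v_i$ and $v_j$ lie on the same side of $S$, and $v_iv_j\in E(G)$ if and only if $v_iv_j\notin E(H)$ otherwise. Combining this with the dichotomy that $v_i$ is adjacent in $H$ either to all of $v_1,\dots,v_{i-1}$ or to none of them, a four-case check gives: if $v_i$ is universal in $H[\{v_1,\dots,v_i\}]$ then in $G$ it is adjacent to exactly the vertices of its own colour among $v_1,\dots,v_{i-1}$, and if $v_i$ is isolated in $H[\{v_1,\dots,v_i\}]$ then in $G$ it is adjacent to exactly the vertices of the opposite colour among $v_1,\dots,v_{i-1}$. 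In either case the operator attaching $v_i$ is $\otimes_w$ or $\otimes_b$, as required.

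Since $v_1$ sits at a leaf and needs no operator, this exhibits a decomposition tree for $G$ with only $\otimes_w$- and $\otimes_b$-operators, so $G$ is a restricted 2-threshold graph. I do not expect a genuine obstacle here: the argument is essentially a translation, and the only points to handle carefully are to state once the equivalence between the decomposition-tree description and the ordered creation sequence (used for both threshold graphs and restricted 2-threshold graphs) and to keep the bookkeeping of the four cases straight. The choice of which colour class $v_i$ must be joined to is forced in each case, so there is no configuration in which the construction could fail.
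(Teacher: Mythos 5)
Your proposal is correct and is essentially the paper's own argument: the paper likewise colours $S$ white, keeps the threshold decomposition tree (equivalently, the creation sequence), and replaces each operator by $\otimes_b$ or $\otimes_w$ according to exactly the same four-case rule (universal $\mapsto$ join to own colour, isolated $\mapsto$ join to the opposite colour). The only difference is presentational, as you phrase the verification via the ordered creation sequence while the paper phrases it as an induction on the height of the tree.
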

\begin{proof}
Assume that $G$ is obtained by switching a threshold graph $H$
with respect to some set $S$ of vertices. Consider the decomposition
tree for the threshold graph $H$. Color the vertices of $S$ white and
color the other vertices black. Change the $\otimes$- and
$\oplus$-operators in the tree as follows.
\begin{enumerate}[\rm 1.]
\item If a vertex $x$ is black and $o(x)=\otimes$ then
change $o(x)$ to $\otimes_b$.
\item If a vertex $x$ is black and $o(x)=\oplus$ then change
$o(x)$  to $\otimes_w$.
\item If a vertex $x$ is white and $o(x)=\otimes$ then change
$o(x)$ to $\otimes_w$.
\item If a vertex $x$ is white and $o(x)=\oplus$ then change
$o(x)$ to $\otimes_b$.
\end{enumerate}
It is easy to see by induction on the height of the decomposition
tree that these operations change the decomposition tree into a
decomposition tree for $G$. \end{proof}

Cameron examined the switching class of cographs~\cite{kn:cameron}.
A graph is a
cograph if every induced subgraph with at least two vertices is
either disconnected or its complement is disconnected~\cite{kn:corneil}.
Thus the class of cographs contains the threshold graphs.
The switching class of cographs are characterized as the graphs
without induced $C_5$, bull, gem, or cogem~\cite{kn:hung}.
In other words, a graph
is switching equivalent to a cograph if and only if it has no
induced subgraph that is switching equivalent to $C_5$.
Thus these graphs are also forbidden for the graphs that are switching
equivalent to a threshold graphs.

\begin{figure}
\setlength{\unitlength}{1.5pt}
\begin{center}
\begin{picture}(110,110)
%row 4
%1

\put(5,95){\circle*{2}} \put(5,100){\circle*{2}}
\put(5,105){\circle*{2}} \put(15,95){\circle*{2}}
\put(15,100){\circle*{2}} \put(15,105){\circle*{2}}
\put(5,95){\line(1,0){10}} \put(5,100){\line(1,0){10}}
\put(5,105){\line(1,0){10}}

%2

\put(30,95){\circle*{2}} \put(30,105){\circle*{2}}
\put(40,100){\circle*{2}} \put(40,105){\circle*{2}}
\put(50,95){\circle*{2}} \put(50,105){\circle*{2}}
\put(30,95){\line(0,1){10}} \put(50,95){\line(0,1){10}}
\put(30,95){\line(2,1){20}} \put(30,105){\line(2,-1){20}}

%3

\put(65,90){\circle*{2}} \put(65,100){\circle*{2}}
\put(65,110){\circle*{2}} \put(75,90){\circle*{2}}
\put(75,100){\circle*{2}} \put(75,110){\circle*{2}}
\put(65,90){\line(1,0){10}} \put(65,90){\line(0,1){20}}
\put(65,100){\line(1,0){10}} \put(65,110){\line(1,0){10}}
\put(75,90){\line(0,1){20}} \put(65,90){\line(1,1){10}}
\put(65,100){\line(1,1){10}} \put(65,100){\line(1,-1){10}}
\put(65,110){\line(1,-1){10}}

%4
\put(90,90){\circle*{2}} \put(90,100){\circle*{2}}
\put(100,95){\circle*{2}} \put(100,105){\circle*{2}}
\put(110,90){\circle*{2}} \put(110,100){\circle*{2}}
\put(90,90){\line(0,1){10}} \put(110,90){\line(0,1){10}}
\put(100,95){\line(0,1){10}} \put(90,100){\line(2,-1){10}}
\put(90,100){\line(2,1){10}} \put(110,100){\line(-2,1){10}}
\put(110,100){\line(-2,-1){10}}

%row 3

%1
\put(10,60){\circle*{2}} \put(5,65){\circle*{2}}
\put(15,65){\circle*{2}} \put(5,75){\circle*{2}}
\put(15,75){\circle*{2}} \put(10,80){\circle*{2}}
\put(10,60){\line(-1,1){5}} \put(10,60){\line(1,1){5}}
\put(5,65){\line(0,1){10}} \put(5,65){\line(1,0){10}}
\put(5,65){\line(1,1){10}} \put(10,80){\line(-1,-1){5}}
\put(10,80){\line(1,-1){5}} \put(5,75){\line(1,0){10}}
\put(15,75){\line(0,-1){10}} \put(15,75){\line(-1,-1){10}}
\put(15,65){\line(-1,1){10}}

%2
\put(30,70){\circle*{2}} \put(35,62.5){\circle*{2}}
\put(35,77.5){\circle*{2}} \put(45,62.5){\circle*{2}}
\put(45,77.5){\circle*{2}} \put(50,70){\circle*{2}}
\put(35,62.5){\line(1,0){10}} \put(35,62.5){\line(-2,3){5}}
\put(45,77.5){\line(-1,0){10}} \put(45,77.5){\line(2,-3){5}}
\put(30,70){\line(2,3){5}} \put(45,62.5){\line(2,3){5}}

%3
\put(65,62.5){\circle*{2}} \put(75,62.5){\circle*{2}}
\put(65,72.5){\circle*{2}} \put(75,72.5){\circle*{2}}
\put(65,77.5){\circle*{2}} \put(75,77.5){\circle*{2}}
\put(65,62.5){\line(1,0){10}} \put(65,62.5){\line(0,1){10}}
\put(75,72.5){\line(-1,0){10}} \put(75,72.5){\line(0,-1){10}}

%4
\put(95,60){\circle*{2}} \put(95,70){\circle*{2}}
\put(105,60){\circle*{2}} \put(105,70){\circle*{2}}
\put(100,75){\circle*{2}} \put(100,80){\circle*{2}}
\put(95,60){\line(1,0){10}} \put(95,60){\line(0,1){10}}
\put(95,60){\line(1,3){5}} \put(95,70){\line(1,0){10}}
\put(95,70){\line(1,1){5}} \put(100,75){\line(0,1){5}}
\put(100,75){\line(1,-1){5}} \put(100,75){\line(1,-3){5}}
\put(105,70){\line(0,-1){10}}

%row 2

%1
\put(2.5,40){\circle*{2}} \put(7.5,40){\circle*{2}}
\put(12.5,40){\circle*{2}} \put(17.5,40){\circle*{2}}
\put(10,30){\circle*{2}} \put(10,50){\circle*{2}}
\put(10,30){\line(-3,4){7.5}} \put(10,30){\line(-1,4){2.5}}
\put(10,30){\line(1,4){2.5}} \put(10,30){\line(3,4){7.5}}
\qbezier(2.5,40)(10,32)(17.5,40) \put(2.5,40){\line(1,0){15}}
\put(10,50){\line(-3,-4){7.5}} \put(10,50){\line(-1,-4){2.5}}
\put(10,50){\line(1,-4){2.5}} \put(10,50){\line(3,-4){7.5}}

%2
\put(35,30){\circle*{2}} \put(35,40){\circle*{2}}
\put(45,30){\circle*{2}} \put(45,40){\circle*{2}}
\put(40,45){\circle*{2}} \put(40,50){\circle*{2}}
\put(35,30){\line(1,0){10}} \put(35,30){\line(0,1){10}}
\put(35,40){\line(1,0){10}} \put(35,40){\line(1,1){5}}
\put(35,40){\line(1,2){5}} \put(45,40){\line(-1,1){5}}
\put(45,40){\line(-1,2){5}} \put(45,40){\line(0,-1){10}}

%3
\multiput(58.75,40)(7.5,0){4}{\circle*{2}}
\put(66.25,32.5){\circle*{2}} \put(73.75,32.5){\circle*{2}}
\put(58.75,40){\line(1,0){22.5}} \put(66.25,32.5){\line(0,1){7.5}}
\put(73.75,32.5){\line(0,1){7.5}}

%4
\put(100,30){\circle*{2}} \put(100,50){\circle*{2}}
\put(95,35){\circle*{2}} \put(95,45){\circle*{2}}
\put(105,35){\circle*{2}} \put(105,45){\circle*{2}}
\put(100,30){\line(-1,1){5}} \put(100,30){\line(1,1){5}}
\put(100,30){\line(0,1){20}} \put(95,35){\line(1,0){10}}
\put(95,35){\line(0,1){10}} \put(95,45){\line(1,0){10}}
\put(95,45){\line(1,1){5}} \put(100,50){\line(1,-1){5}}
\put(105,45){\line(0,-1){10}}

%row 1
%1
\put(5,0){\circle*{2}} \put(2.5,10){\circle*{2}}
\put(10,17.5){\circle*{2}} \put(17.5,10){\circle*{2}}
\put(15,0){\circle*{2}} \put(5,0){\line(1,0){10}}
\put(5,0){\line(-1,4){2.5}} \put(15,0){\line(1,4){2.5}}
\put(10,17.5){\line(-1,-1){7.5}} \put(10,17.5){\line(1,-1){7.5}}

%2
\put(40,0){\circle*{2}} \put(30,7.5){\circle*{2}}
\put(50,7.5){\circle*{2}} \put(35,15){\circle*{2}}
\put(45,15){\circle*{2}} \put(40,0){\line(-4,3){10}}
\put(40,0){\line(4,3){10}} \put(40,0){\line(-1,3){5}}
\put(40,0){\line(1,3){5}} \put(35,15){\line(-2,-3){5}}
\put(35,15){\line(1,0){10}} \put(45,15){\line(2,-3){5}}

%3
\put(70,0){\circle*{2}} \put(60,7.5){\circle*{2}}
\put(80,7.5){\circle*{2}} \put(65,15){\circle*{2}}
\put(75,15){\circle*{2}} \put(65,15){\line(-2,-3){5}}
\put(65,15){\line(1,0){10}} \put(75,15){\line(2,-3){5}}

%4
\put(100,0){\circle*{2}} \put(92.5,10){\circle*{2}}
\put(92.5,20){\circle*{2}} \put(107.5,10){\circle*{2}}
\put(107.5,20){\circle*{2}} \put(100,0){\line(-3,4){7.5}}
\put(100,0){\line(3,4){7.5}} \put(92.5,10){\line(1,0){15}}
\put(92.5,10){\line(0,1){10}} \put(107.5,10){\line(0,1){10}}

\end{picture}
\end{center}
\caption{Forbidden subgraphs of the switching class of threshold
graphs. Each of them is switching equivalent to $3K_2$, $C_5$ or
$C_4+2K_1$.}\label{fig:FIS_switching_class}
\end{figure}
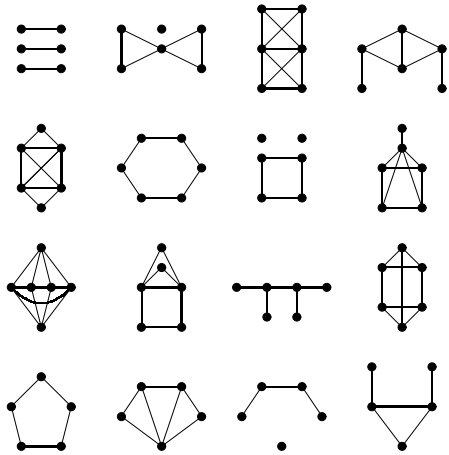

\begin{theorem}
\label{switching threshold}
A graph $G=(V,E)$ is switching equivalent to a threshold graph if and only
if $G$ has no induced subgraph which is switching equivalent to
$3K_2$, $C_5$ or $C_4+2K_1$.
\end{theorem}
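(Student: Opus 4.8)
I would prove the two implications separately, routing the sufficiency through an identification of the switching class of threshold graphs with the restricted $2$-threshold graphs. For \emph{necessity}, the key is that switching commutes with passing to induced subgraphs: switching $G$ with respect to $S$ and then deleting a set $W$ of vertices produces the same graph as first deleting $W$ and then switching with respect to $S-W$. Since the threshold graphs form a hereditary class, it follows that if $G$ is switching equivalent to a threshold graph then so is every induced subgraph of $G$. Hence it suffices to verify that none of $3K_2$, $C_5$, $C_4+2K_1$ is switching equivalent to a threshold graph. Each of the three switching classes is finite, and a direct inspection — these classes are exactly the graphs in Figure~\ref{fig:FIS_switching_class} — shows that every graph in them has an induced $2K_2$, $C_4$ or $P_4$ and so is not a threshold graph.

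For \emph{sufficiency} I would first record two auxiliary facts. The first is that the switching class of threshold graphs coincides with the class of restricted $2$-threshold graphs: the inclusion ``$\subseteq$'' is the previous theorem, and for ``$\supseteq$'' one reads the operator relabelling (rules 1--4 in the proof of the previous theorem) backwards. With the colours fixed, that relabelling is a bijection between the colour--operator pairs with operators in $\{\otimes,\oplus\}$ and those with operators in $\{\otimes_b,\otimes_w\}$, so it turns any coloured restricted decomposition tree into a threshold decomposition tree of a graph $H$ and exhibits $G$ as the switch of $H$ with respect to the set of white vertices, by the same induction on the height of the tree. The second fact is that the obstruction set for ``switching equivalent to a threshold graph'' is a union of switching classes: if $H$ is a minimal forbidden induced subgraph and $H'$ is a switch of $H$ with respect to $S$, then for each vertex $v$ the graph $H'-v$ is the switch of $H-v$ with respect to $S-v$, hence switching equivalent to $H-v$, which is switching equivalent to a threshold graph, so $H'$ is again minimal forbidden. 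Since every proper induced subgraph of $3K_2$, of $C_5$, and of $C_4+2K_1$ is switching equivalent to a threshold graph (a routine check on the handful of small graphs that arise), these three switching classes lie in the obstruction set, and the content of the theorem is that there is nothing else. By the first fact this amounts to: a graph $G$ with none of $3K_2$, $C_5$, $C_4+2K_1$ (up to switching) as an induced subgraph is a restricted $2$-threshold graph.

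To prove that last statement I would induct on $n=|V(G)|$. The goal at each step is to choose a $2$-colouring of $V(G)$ and a vertex $v$ whose neighbourhood inside $V(G)-v$ is exactly one of the two colour classes, so that $v$ may be introduced last under a $\otimes_b$- or $\otimes_w$-operator; then $v$ is deleted and the induction is applied to $G-v$, which again avoids all three obstructions. To find such a $v$ I would use the structural material already at hand — the description of neighbourhoods and anti-neighbourhoods given by Lemma~\ref{neighbors} and Theorem~\ref{char good}, the fact that a $3K_2$-free graph has at most two components with more than one vertex, the partitioned characterisation of Theorem~\ref{partitioned case}, and the ``peel a vertex of extreme degree'' idea that recognises threshold graphs — and then propagate the forced colours of the remaining vertices, checking that no conflict can occur.

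The hard part will be exactly this peeling lemma: showing that whenever there is no admissible vertex $v$ together with a consistent $2$-colouring, the graph $G$ must contain, as an induced subgraph, one of the graphs of Figure~\ref{fig:FIS_switching_class}, that is, an induced subgraph switching equivalent to $3K_2$, $C_5$ or $C_4+2K_1$. I expect the case analysis to be where all the configurations of that figure — the house, the bull, the gem, and the rest — naturally appear, and keeping the bookkeeping of colours and of which switch one is looking at under control is where the effort lies. Once the peeling lemma is in place the induction closes, producing a restricted decomposition tree for $G$ and, via the first auxiliary fact, a threshold graph together with a set with respect to which $G$ is its switch. (Finiteness of the obstruction set, which this argument reproves concretely, also follows abstractly from the method of Theorem~\ref{kruskal}.)
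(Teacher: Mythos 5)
Your necessity argument is sound (switching commutes with vertex deletion, the class is hereditary, and the three named graphs are easily seen not to switch to threshold graphs), and your two auxiliary observations -- that the obstruction set is a union of switching classes, and that the switching class of threshold graphs coincides with the restricted $2$-threshold graphs -- are correct and do not introduce circularity. But the entire mathematical content of the sufficiency direction sits in what you yourself label ``the hard part,'' the peeling lemma: that any graph avoiding the three obstruction switching classes admits a vertex $v$ and a consistent black-and-white colouring such that $N(v)$ is exactly one colour class of $G-v$, with the colouring propagating without conflict through the recursion. You give no argument for this, only the expectation that a case analysis will work out. That is not a proof outline with a checkable gap filled by routine verification; it is a restatement of the theorem in a different (equivalent) form. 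In particular, the colour-propagation issue you mention in passing is genuinely delicate -- the colouring of $G-v$ is constrained by the choice made at $v$, and one must rule out that every admissible choice of $v$ leads to a conflict further down -- and nothing in your proposal indicates how to control this.

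For comparison, the paper avoids vertex-by-vertex peeling entirely. It first invokes the known characterization of the switching class of cographs (no induced subgraph switching equivalent to $C_5$) to replace $G$ by a switching-equivalent cograph $H$, switches $H$ to a disconnected cograph if necessary, and then exploits the component structure: at most two nontrivial components (no $3K_2$), at most one component that is not already threshold, an inductive hypothesis applied to that component producing a switching set $S$, and then a structural analysis of $\Delta=C_1-N_H[x]$ and the ``classes'' of $S'$ and $\Delta$ that culminates in an explicit global switching set (typically of the form $Q+(V-C_1)$) turning $H$ into a threshold graph. The $C_4+2K_1$ obstruction enters precisely in that analysis. If you want to salvage your route, you would need to actually prove the peeling lemma, and I would expect that to require essentially the same amount of case analysis as the paper's argument, without the head start that the reduction to cographs provides.
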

\begin{proof}
It is easy to check the necessity. We prove the sufficiency.
Since the graph has no induced subgraph which is switching equivalent
to $C_5$ the graph $G$ is switching equivalent to a cograph $H$.
If $H$ is connected then we can switch it to a disconnected cograph
by switching it with respect to one of the components of the
complement. Henceforth we assume that $H$ is disconnected.
Let $C_1,C_2,\ldots$ be the components of $H$. There are at most
two components with more than one vertex, otherwise the graph has an
induced $3K_2$.
Furthermore, there can be at most one component which
is not a threshold graph already.
First assume that every
component is a threshold graph. Then switch the graph with respect
to a maximal clique in one of them. It is easy to check
that this produces a threshold graph. Assume that $C_1$ does
not induce a
threshold graph.

By induction we can assume that there is a
subset $S$ of $C_1$ such that switching $H[C_1]$ with respect to
$S$ produces a threshold graph $T$. The threshold graph
$T$ has a universal or
isolated vertex $x$. We may assume that $x \not\in S$,
otherwise we can switch $H[C_1]$ with respect to $C_1-S$.
If $x$ is isolated in $T$ then $S=N_H(x)$ and if $x$ is universal
in $T$ then $S=C_1-N_H[x]$.

Assume that $S=N_H(x)$. If $C_1=N_H[x]$ then $H[C_1]$ is a threshold
graph. Assume that $C_1-N_H[x] \neq \es$. Let $\Delta=C_1-N_H[x]$.
For every pair of vertices $a$ and $b$ in $\Delta$ their
neighborhoods in $S$ are ordered by inclusion, otherwise the switch
produces a $C_4$, or $P_4$ or $2K_2$. Since $H[C_1]$ is connected
and has no induced $P_4$ every vertex of $\Delta$ has a neighbor in
$S$. Let $S^{\prime} \subseteq S$ be the subset of vertices in $S$
that have a neighbor in~$\Delta$. Then every pair with one element
in $S^{\prime}$ and one element in $S-S^{\prime}$ is adjacent, since
there is no $P_4$ in $H[C_1]$. The switch makes every vertex of
$S-S^{\prime}$ adjacent to every vertex of $\Delta$. Since the $C_4$
is forbidden at least one of $S-S^{\prime}$ and $S^{\prime}+\Delta$
is a clique.

Assume that there is an
edge $(p,q)$ in $\Delta$ and assume that $p$ has a neighbor
$p^{\prime}$ in $S$ that is not a neighbor of $q$.
Then $[q,p,p^{\prime},x]$ is a $P_4$ in $H$ which is forbidden.
Thus all vertices of every component of $\Delta$ have the
same neighbors in $S$. Call the sets of vertices of $S^{\prime}$ that
have the same neighbors in $\Delta$ the classes of $S^{\prime}$.
Call the unions of those components of $\Delta$ that have the same
neighbors in $S^{\prime}$ the classes of $\Delta$.

Assume that there are two classes $P$ and $Q$ in $S^{\prime}$ and
assume that $P$ is adjacent to a class $P^{\prime}$ of $\Delta$
which is not adjacent to $Q$. Let $Q$ be adjacent to a class
$Q^{\prime}$ of~$\Delta$. Then $P$ is also adjacent to $Q^{\prime}$.
Since $[P^{\prime},P,Q^{\prime},Q]$ has no induced $P_4$, every
vertex of $P$ is adjacent to every vertex of $Q$. Furthermore, since
there is no $C_4$ in the switched graph, at least one of the classes
$P$ and $Q$ is a clique. Thus all of $S^{\prime}$ except possibly
one class is a clique.

Since there is no $2K_2$ in the switched graph
at most one of the components of $\Delta$ has more than one vertex.
Furthermore, if there is a component with more than one vertex
then it has the least number of neighbors in $S^{\prime}$ among all
components of $\Delta$,
otherwise the switch produces
a $2K_2$.

Every pair of nonadjacent classes in $S^{\prime}$ and $\Delta$
become adjacent in the switched graph. Thus at least one of them
is a clique. If there is a class in $S^{\prime}$ that is not a
clique then it is adjacent to all except possibly one class of
$\Delta$ otherwise, there is a $C_4$ in the
switched graph. Consider a component $P$ in $\Delta$ that has
more than one vertex and a class $Q$ in $S^{\prime}$
that is not a clique. If $P$ and $Q$ are not adjacent then
they are adjacent in the switched graph, thus $P$ is a
clique. If $P$ and $Q$ are adjacent then $Q$ is an independent set,
otherwise the switched graph has a $2K_2$.

If $S-S^{\prime}$ is not a clique, then $S^{\prime}+\Delta$
is a clique in the switched graph. Thus $S^{\prime}$ and
$\Delta$ are disjoint cliques in $H_1[C_1]$. This contradicts that
$C_1$ is a component of $H$ and $H[C_1]$ is not a
threshold graph. Thus $S-S^{\prime}$ is empty or else
it is a
clique.

Assume that there is a class $P$ in $\Delta$ which contains an edge.
Let $Q$ be the class in $S^{\prime}$ that is adjacent to~$P$. Then
$Q$ is an independent set, otherwise there is a $2K_2$ in the
switched graph. This class $Q$ is adjacent to all vertices of
$\Delta$, since $P$ has the least neighbors in $S^{\prime}$ among
all classes in $\Delta$. Thus $Q$ is an independent, universal set
of $H[C_1]$.

Assume that there is a component $P$ in $\Delta$ that contains an
edge. Let $Q$ be the neighborhood of $P$ in~$S^{\prime}$. Then
$S-Q=\es$ otherwise, $H$ contains a graph which is switching
equivalent to $3K_2$. Assume that $S-Q=\es$ and that $Q$ is an
independent set. If $Q$ contains more than one vertex then $V-C_1$
is a clique. Switch $H$ with respect to
\[Q+(V-C_1).\] This gives a
threshold graph.

Assume that $\Delta$ is an independent set.
Let $Q$ be a class in $S^{\prime}$ which is not a clique.
Then every vertex of $Q$
is adjacent to all other vertices of $C_1$ except possibly
one vertex in $\Delta$. If there is a vertex in $\Delta$
which is not adjacent to $Q$ then $H$ contains a $C_4+2K_1$
which is a contradiction. Thus $Q$ is adjacent to all vertices
of $C_1-Q$.

First assume that
$Q$ is an independent set.
Switch $H$ with respect to
\[Q+(V-C_1).\] The set $Q$ becomes
a set of isolated vertices. The vertex $x$ is adjacent to the
clique $S-Q$. The graph induced by $\Delta+(S-Q)$ is a threshold
graph. The set $V-C_1$ becomes a clique of vertices
adjacent to all vertices in $C_1-Q$.
Thus $H$ switches to a threshold graph.

Assume that $Q$ is not an independent set.
Switch $H$ with respect to
\[Q+(V-C_1).\]
This gives two disjoint threshold graphs $Q$ and $V-Q$.
As shown at the start, we can switch the graph to a threshold graph.

The case where $S=C_1-N[x]$ is similar. \end{proof}

\begin{theorem}
A graph is switching equivalent to a threshold graph if and only
if it is a restricted 2-threshold graph.
\end{theorem}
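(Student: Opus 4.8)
The plan is to treat the two implications separately. One direction is already contained in the first theorem of this section, which shows that every graph in the switching class of a threshold graph is a restricted 2-threshold graph. So it remains to prove the converse, that every restricted 2-threshold graph is switching equivalent to a threshold graph, and for this I would run the construction of that earlier proof in reverse.

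Concretely, let $G$ be a restricted 2-threshold graph and fix a black-and-white coloring of $G$ together with a decomposition tree $(T,f)$ for $G$ that uses only $\otimes_w$- and $\otimes_b$-operators; such a tree exists by definition. Let $S$ be the set of white vertices. I would transform the operators of $T$ as follows: if the added vertex $x$ is black, replace $o(x)=\otimes_b$ by $\otimes$ and replace $o(x)=\otimes_w$ by $\oplus$; if $x$ is white, replace $o(x)=\otimes_w$ by $\otimes$ and replace $o(x)=\otimes_b$ by $\oplus$. (The unique leaf that is not the right child of any node carries no operator and contributes no edges, so nothing happens there.) Since the resulting tree uses only $\otimes$- and $\oplus$-operators, it is a decomposition tree of some threshold graph $H$, and this recipe is exactly the inverse of the operator substitution used in the first theorem of the section; the inverse is well defined because, once the color of the added vertex is known, the correspondence $\{\otimes,\oplus\}\leftrightarrow\{\otimes_b,\otimes_w\}$ is a bijection.

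The claim I then want is that switching $H$ with respect to $S$ returns $G$. I would prove this by induction on the height of $T$, mirroring the induction of that first theorem: at the node whose right leaf is $x$, the adjacencies of $x$ to the left subtree, after switching with respect to $S$, toggle precisely for those left-subtree vertices lying on the opposite side of $S$ from $x$, and a short case check on the four possibilities ($x$ black or white; original operator $\otimes$ or $\oplus$) shows that this reproduces exactly the rule encoded by $\otimes_b$ (adjacent to the black vertices) or $\otimes_w$ (adjacent to the white vertices) in $T$. Hence $G$ is obtained from the threshold graph $H$ by a single switch, so $G$ lies in the switching class of a threshold graph; combined with the first theorem of the section this gives the stated equivalence.

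I expect the only genuine work to be the bookkeeping in that final case analysis: checking that the four-way substitution really is inverse to the one in the first theorem, and that the induction step survives the interaction between switching (a global operation on all of $V$) and the local "add a vertex" steps of the decomposition tree. Everything else — existence of the tree, the fact that the transformed tree is a legitimate threshold decomposition tree, and the handling of the operator-free initial leaf — is routine.
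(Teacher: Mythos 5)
Your proof is correct, but it takes a genuinely different route from the paper. The paper disposes of this theorem in one line by leaning on Theorem~\ref{switching threshold}: it checks that none of the forbidden induced subgraphs of the switching class of threshold graphs (the graphs switching equivalent to $3K_2$, $C_5$ or $C_4+2K_1$) is a restricted 2-threshold graph, so by hereditariness every restricted 2-threshold graph avoids the whole obstruction set and is therefore switching equivalent to a threshold graph. You instead argue constructively, inverting the operator substitution of the preceding theorem: given the $\{\otimes_b,\otimes_w\}$-tree and taking $S$ to be the white class, you recover a $\{\otimes,\oplus\}$-tree for a threshold graph $H$ and verify that switching $H$ with respect to $S$ returns $G$. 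Your four-way case check is sound, and the worry about switching being global versus the local tree steps evaporates because each pair of vertices has its adjacency decided at exactly one node (the one whose right leaf is the later-added vertex of the pair), both before and after the switch; in fact you do not even need induction, just that per-pair observation. What your approach buys is independence from the long proof of Theorem~\ref{switching threshold} and an explicit witness (the threshold graph $H$ and the switching set $S$); what the paper's approach buys is brevity given machinery it needs anyway for the forbidden-subgraph characterization. The only implicit ingredient you should acknowledge is the converse of the paper's stated tree construction, namely that every graph built by a tree with only $\oplus$- and $\otimes$-operators is a threshold graph; this is standard and follows because the last-added vertex is always isolated or universal in the graph induced by any subset containing it.
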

\begin{proof}
It is easy to check that none of the forbidden induced subgraphs of
the switching class of threshold graphs is restricted 2-threshold.
\end{proof}

\bibliographystyle{agsm}

\end{document}